    \crefname{ex}{Example}{Examples}
    \crefname{thm}{Theorem}{Theorems}
        \tikzset{%
        fwdrxn/.style={very thick, arrows={-Stealth[length=5pt,width=5pt]}},
        revrxn/.style={very thick, arrows={-Stealth[length=5pt,width=5pt,left]}}
        }
	\newcommand{\tikzc}[1]
	    {\begin{center}\begin{tikzpicture}#1
	    \end{tikzpicture}\end{center}}
	\newcommand\blue[1]{{\textcolor{blue}{#1}}}
	\definecolor{orange}{RGB}{250, 140, 0}
	\definecolor{turq}{RGB}{0, 160, 160}
	\definecolor{violet}{RGB}{164, 98, 234}
	\newtheorem{thm}{Theorem}[section]
	\newtheorem{lem}[thm]{Lemma}
	\newtheorem*{theorem*}{Theorem}
	\theoremstyle{definition}
		\newtheorem{defn}[thm]{Definition}
		\newtheorem{ex}[thm]{Example}
	\newtheorem{rmk}[thm]{Remark}
	\newtheoremstyle{TheoremNum}
        {\topsep}{\topsep}              
        {\itshape}                      
        {}                              
        {\bfseries}                     
        {.}                             
        { }                             
        {\thmname{#1}\thmnote{ \bfseries #3}}
    \theoremstyle{TheoremNum}
\newcommand{\df}[1]{{\bf\emph{#1}}}		
    \def\thm@space@setup{%
        \thm@preskip=\parskip \thm@postskip=0pt
    }\makeatother
\newcommand{\eq}[1]{\begin{align*}#1\end{align*}}
	\newcommand{\eqn}[1]{\begin{align}#1\end{align}}  
\newcommand{\ds}{\displaystyle}	
\newcommand{\st}{\colon}                
\newcommand\mbf[1]{\mathbf{#1}}  
\newcommand\mrm[1]{\mathrm{#1}}
\newcommand{\rr}{\ensuremath{\mathbb{R}}}   
\newcommand{\zz}{\ensuremath{\mathbb{Z}}}
\renewcommand{\epsilon}{\varepsilon}	
\renewcommand{\phi}{\varphi}			
\DeclareMathOperator{\ran}{Im}	    	
\DeclareMathOperator{\Ran}{Im}		    %
\DeclareMathOperator{\Ker}{ker}		    
\DeclareMathOperator{\Span}{span}		
\newcommand{\braket}[2]{\langle{#1},\,{#2}\rangle}	
\newcommand{\kk}{\kappa}
\newcommand{\vv}[1]{{\boldsymbol{#1}}}  
\newcommand{\mm}[1]{\mathbf{#1}}               
\newcommand{\rrp}{\rr_{\geq 0}}
\newcommand{\rrpp}{\rr_{>0}}
\newcommand{\zzp}{\zz_{\geq 0}}
\newcommand{\RR}{\ensuremath{\rightleftharpoons}}
\newcommand{\FR}{\ensuremath{\rightarrow}}
\newcommand{\Gk}{\ensuremath{G_{\vv \kk}}}
\newcommand{\xx}{\vv x}
\newcommand{\yy}{\vv y}
    \newcommand{\cf}[1]{\chemfig{#1}}
\newcommand{\newt}{\mathrm{Newt}}
\newcommand{\Int}[1]{{#1}^{\mathrm{o}}}
\title{
    Single-Target Networks
}
\author[1,2]{
         Gheorghe Craciun%
}
\author[1]{
        Jiaxin Jin%
}
\author[1]{
        Polly Y. Yu%
}
\affil[1]{\small Department of Mathematics, University of Wisconsin-Madison}
\affil[2]{\small Department of Biomolecular Chemistry, University of Wisconsin-Madison}
\date{} 
\begin{document}

\maketitle
\begin{abstract} 
\noindent
Reaction networks can be regarded as finite oriented graphs embedded in Euclidean space. \emph{\mbox{Single-target} networks} are reaction networks with an arbitrarily set of source vertices, but \emph{only one} sink vertex.   
    We completely characterize the dynamics of all mass-action systems generated by single-target networks, as follows: either  \emph{(i)} the system is globally stable for all choice of rate constants (in fact, is dynamically equivalent to a detailed-balanced system with a single linkage class) or \emph{(ii)} the system has no positive steady states for any choice of rate constants and all trajectories must converge to the boundary of the positive orthant or to infinity. Moreover, we show that global stability occurs if and only if the target vertex of the network is in the relative interior of the convex hull of the source vertices.
\end{abstract}

\section{Introduction}
\label{sec:intro}

Given a directed graph of reactions or interactions, one can write a system of differential equations modeling the time-dependent abundance of the interacting species, based on mass-action kinetics. The resulting dynamical systems are called \emph{mass-action systems}, and are very common models in chemistry, biochemistry and population dynamics~\cites{CraciunYu2018}. There has been a great amount of work on establishing  connections between the qualitative dynamics of these systems and their underlying network structures~\cites{HornJackson1972, Horn1972, Feinberg1972, Feinberg1987, Gunawardena2003, Angeli2009, FeinbergBook, CraciunYu2018}.

For example, if the underlying network is \emph{reversible} (i.e., for every edge, there is an edge in the reverse direction), then the mass-action system admits a positive steady state for any choice of positive rate constants~\cite{Boros2019}. In addition, if the rate constants satisfy some algebraic constraints such as the Wegscheider conditions~\cite{Wegscheider1901}, the mass-action system is in a state of thermodynamic equilibrium, where the rate of any forward reaction is balanced by the rate of the reverse reaction. Such a system, said to be \emph{detailed-balanced}, enjoys remarkable dynamical properties, like the existence of a globally defined Lyapunov function, and uniqueness of a positive steady state within every invariant polytope detemined by mass conservation laws.

Similarly, if the underlying network is \emph{weakly reversible} (i.e., every edge is part of an oriented cycle), again the mass-action system admits a positive steady state for any choice of positive rate constants~\cite{Boros2019}. If the rate constants satisfy some algebraic constraints, the mass-action system is \emph{complex-balanced}~\cites{Feinberg1972, Horn1972, HornJackson1972}, a generalization of detailed-balanced. Again, the system admits a globally defined Lyapunov function, has a unique positive steady state within every invariant polytope, and is conjectured to be globally stable. This is the \emph{Global Attractor Conjecture}, which has been proved in several cases: when the network has only one connected component~\cites{Anderson2011_GAC, BorosHofbauer2019}; when the system has dimension three or less~\cites{Pantea2012, CraciunNazarovPantea2013}, or when the network is \emph{strongly endotactic}~\cite{GopalkrishnanMillerShiu2014, classes_of_networks}. 

Some networks are always complex-balanced under mass-action kinetics, regardless of the values of rate constants: these are the weakly reversible network with deficiency zero~\cite{Feinberg1972, Horn1972}. One interpretation of the deficiency zero property is that the reaction vectors span the maximal dimensional subspace possible~\cite{FeinbergBook}.

In order to describe various properties of reaction networks, it is useful to visualize them in Euclidean space as \emph{Euclidean embedded graphs}~\cite{Craciun2019_TDI}. Each vertex of the network is naturally associated to a vector in $\rr^n$, via its stoichiometric coefficients; hence, every directed edge in the network (i.e., reaction) can be visualized as a vector between vertices of the network in $\rr^n$. The resulting directed graph in $\rr^n$ is called the Euclidean embedded graph of the reaction network, and its \emph{Newton polytope} is the convex hull of its \emph{source} vertices. A \emph{strongly endotactic network} is essentially an \lq\lq inward pointing\rq\rq\ one: any edge originating on the boundary of the Newton polytope must point inside the polytope or along its boundary (i.e., cannot point outside the polytope), and on any face of the polytope there exists an edge that starts on that face and points away from it.

Our main result concerns the class of \emph{single-target networks}. As the name suggests, these are reaction networks with exactly one sink vertex. In \Cref{thm:stnmas-stable,thm:stnmas} we prove that, under mass-action kinetics, a single-target network either has a globally stable positive steady state for any choice of positive rate constants, or has no positive steady state for any choice of rate constants. These results take advantage of the notion of \emph{dynamical equivalence} (\Cref{def:DE}), where different network structures can give rise to the same differential equations. More precisely, we prove that the dynamics generated by a single-target network is dynamically equivalent to a detailed-balanced system with one connected component if and only if the sink is in the relative interior of the Newton polytope, regardless of the choice of rate constants. 

In summary, the present work reveals a new class of reaction networks (i.e., {single-target networks}) for which the dynamics of the corresponding mass-action systems is completely determined by network structure, irrespective of parameter values (i.e., rate constants). As we mentioned above, the only other class of networks of this type are weakly reversible deficiency zero networks; on the other hand,  general single-target networks are neither weakly reversible nor deficiency zero. Moreover, we present examples of  reaction networks which, even though they are not single-target networks, are \emph{dynamically equivalent} to single-target networks under some mild assumptions; this allows us to use the theory of single-target networks  to characterize their dynamical properties.

This paper is organized as follows. After a preliminary section on mass-action systems and dynamical equivalence in \Cref{sec:crn}, we define single-target networks and prove our main results in \Cref{sec:stn}. For comparison, in \Cref{sec:multtarg} we consider networks with \emph{multiple targets}, and show that they may not be globally stable even if the sink vertices are contained in the interior of the convex hull of the source vertices. This last section suggests future directions to understanding the dynamics of strongly endotactic networks.

\section{Mass-action systems}
\label{sec:crn}

Throughout, let $\rrpp$ denote the positive real numbers, and $\rrpp^n$ denote the set of real vectors with positive components, i.e., $\xx \in \rrpp^n$ if $x_i > 0$ for all $i = 1,2,\dots, n$. We write $\xx > \vv 0$ when $\xx \in \rrpp^n$. Analogously, let $\rrp$, $\rrp^n$ denote the sets of non-negative numbers and vectors respectively. For any $\xx$, $\yy \in \rr^n$, define the vector operations 
    \eq{ 
        \xx^{\yy} &= x_1^{y_1}x_2^{y_2}\cdots x_n^{y_n}   \quad \text{ whenever $\xx \in \rrpp^n$},\\
        \log(\xx) &= (\log x_1, \log x_2, \ldots, \log x_n)^\top  \quad \text{ whenever $\xx \in \rrpp^n$}, \\
        \exp(\xx) &= ( e^{x_1}, e^{x_2}, \ldots, e^{x_n} )^\top, \\
        \xx \circ \yy &= (x_1y_1, x_2y_2, \ldots, x_ny_n)^\top,  
    }
and let $\braket{\xx}{\yy}$ denote the standard scalar product of $\rr^n$. If a set $X \subseteq \rr^n$ is contained in some affine subspace of $\rr^n$, we denote by $\Int{X}$  the \emph{relative interior} of  $X$ with respect to the usual topology of $\rr^n$. 

\medskip 

\begin{defn}
\label{def:crn}
	A \df{reaction network} is a directed graph $G = (V_G,E_G)$, where $V_G$ is a finite subset of $\rr^n$ and there are no self-loops. 
\end{defn}

When working with only one reaction network, we simply write $G = (V,E)$. An edge $(\yy, \yy')$ is also denoted $\yy \to \yy'$. If both $\yy\to\yy'$ and $\yy'\to\yy$ are edges, the \df{reversible pair} is denoted $\yy \RR \yy'$. A vertex $\yy \in V$ is a \df{source vertex} if $\yy \to \yy'$ is an edge in the network for some $\yy' \in V$; let $V_s \subseteq V$ denote the set of source vertices. A vertex $\yy' \in V$ is a \df{target vertex} if $\yy \to \yy' \in E$ for some $\yy \in V$.

Vertices are points in $\rr^n$, so an edge $\yy \to \yy' \in E$ can be regarded as a bona fide vector in $\rr^n$. Each edge is associated to its {\df{reaction vector}} $\yy' - \yy \in \rr^n$.

We will construct a dynamical system using the graph $G$ and the data stored in the vertices. The coordinates of a source vertex are exponents of a monomial. In algebra, the Newton polytope of a polynomial is the convex hull of the exponents of the monomials. Here, we define the Newton polytope using all the monomials appearing in the right-hand side of the dynamical system. In \cite{GopalkrishnanMillerShiu2014}, the Newton polytope of a reaction network is also called a \emph{reactant polytope}.

\begin{defn}
\label{def:newton}
    The \df{Newton polytope} of a reaction network $G = (V,E)$ is the convex hull of the source vertices, i.e.,
    \eq{ 
        \newt(G) = \left\{ 
            \sum_{\yy \in V_s} \alpha_{\yy} \yy 
            \,\colon\, 
            \alpha_{\yy} \geq 0 \text{ and } \sum_{\yy \in V_s} \alpha_{\yy} = 1
        \right\}.
    }
\end{defn}
    The more important object is the relative interior of the Newton polytope
    \eq{ 
        \Int{\newt(G)} = \left\{ 
            \sum_{\yy \in V_s} \alpha_{\yy} \yy 
            \,\colon\, 
            \alpha_{\yy} > 0 \text{ and } \sum_{\yy \in V_s} \alpha_{\yy} = 1
        \right\}.
    }
    Note that in $\Int{\newt(G)}$, all the coefficients in the sum must be positive.

\begin{defn}
\label{def:mas}
    Let $G = (V,E)$ be a reaction network in $\rr^n$ with edge set $E = \{\yy_i \to \yy'_i \}_{i=1}^R$. Let $\vv\kk = (\kk_i)_{i=1}^R$ be a vector of positive constants, called the vector of \df{rate constants}. A graph $G$ together with a vector of rate constants $\vv\kk$ gives rise to a \df{mass-action system}%
        \footnote{%
            Strictly speaking, because we have taken $V \subseteq \rr^n$ (instead of $V \subseteq \rrp^n$ or $V \subseteq \zzp^n$), the weighted directed graph $\Gk$ may \emph{not} define a mass-action system as it is classically understood; rather it is a \emph{power-law system}~\cite{CraciunNazarovPantea2013}. However, in this work, we are interested in a class of systems which shares the same dynamics as complex-balanced mass-action systems even though $V \subseteq \rr^n$. For simplicity, we refer to these as \emph{mass-action systems}.   
        },
    denoted $\Gk$. Its \df{associated dynamical system} is the system of differential equations on $\rrpp^n$ given by
    \eqn{\label{eq:mas}
        \frac{d\xx}{dt} = \sum_{i=1}^R \kk_i \xx^{\yy_i}(\yy'_i - \yy_i).
    }
\end{defn}

When the edge set is not ordered, we sometimes index the rate constants using the edge label itself, e.g., $\kk_{\yy\to\yy'}$ is the rate constant of the edge $\yy \to \yy'$. It is sometimes convenient to refer to $\kk_{\yy \to \yy'}$ even though $\yy \to \yy'$ may \emph{not} be an edge in the network. In such cases, the convention is to take $\kk_{\yy\to\yy'} = 0$.

The system of differential equations \eqref{eq:mas} can be written as 
    \eq{ 
        \frac{d\xx}{dt} = \mm \Gamma \begin{pmatrix}
            \kk_1 \xx^{\yy_1} \\
            \kk_2 \xx^{\yy_2} \\
            \vdots \\
            \kk_R \xx^{\yy_R}
        \end{pmatrix},
    }
where the \df{stoichiometric matrix} $\mm \Gamma$ has as its $i$th column the reaction vector $\yy'_i - \yy_i$. Since $\frac{d\xx}{dt}$ lies in the \df{stoichiometric subspace} $S = \ran\mm\Gamma$, the solution to the system \eqref{eq:mas}, with initial value $\xx_0 \in \rrpp^n$, lies in the affine space $\xx_0+S$. The \df{positive stoichiometric compatibility class}\footnote{Note that for classical mass-action systems the positive orthant is forward invariant~\cite{VOLPERT}, which implies that each positive stoichiometric compatibility class is forward invariant. For  {\em power-law} systems this is not true in general; nevertheless, for  {\em single-target} systems for which the target vertex is in the interior of the convex hull of its source vertices (which are the main focus of this paper) we  show that they are dynamically equivalent to reversible single-linkage-class systems, which implies that they are permanent~\cite{GopalkrishnanMillerShiu2014}, and in turn this  {\em does} imply that the positive orthant is forward invariant for such single-target systems.} is the set $(\xx_0+S)_> = (\xx_0+S)\cap \rrpp^n$.

We say that a reaction network $G = (V,E)$ is \df{reversible} if $\yy' \to \yy \in E$ whenever $\yy \to \yy' \in E$, and is  \df{weakly reversible} if every connected component of $G$ is strongly connected, i.e., every edge is part of an oriented cycle. A mass-action system $\Gk$ is called  reversible or weakly reversible if the underlying reaction network $G$ is reversible or weakly reversible. 

\subsection{Detailed-balanced systems}

Reversibility and weak reversibility are graph-theoretic properties necessary for \emph{detailed-balanced} and \emph{complex-balanced} steady states, respectively. These steady states are known to be asymptotically stable and conjectured to be globally stable. The Global Attractor Conjecture has been proved for certain classes of mass-action systems~\cites{GopalkrishnanMillerShiu2014, Pantea2012, CraciunNazarovPantea2013}, including the case where the network has a single connected component~\cites{Anderson2011_GAC, BorosHofbauer2019}. A general proof has been proposed in \cites{Craciun_GAC, Craciun2019_TDI}. 

\begin{defn}
\label{def:steadystate}
    Let $\Gk$ be a mass-action system. 
    \begin{enumerate}[label=(\alph*)]
    \item 
        A state $\xx^* > \vv 0$ is a \df{positive steady state} if the right-hand side of 
        \Cref{eq:mas} evaluated at $\xx^*$ is $\vv 0$. 
    \item 
        A positive steady state $\xx^* > \vv 0$ is \df{detailed-balanced} if for every $\yy \to \yy' \in E$, we have 
        \eqn{ \label{eq:DB} 
            \kk_{\yy\to \yy'} (\xx^*)^{\yy} = \kk_{\yy' \to \yy} (\xx^*)^{\yy'}.
        }
    \item 
        A positive steady state $\xx^* > \vv 0$ is \df{complex-balanced} if for every $\yy_0 \in V$, we have 
        \eqn{ \label{eq:CB} 
            \sum_{\yy_0 \to \yy' \in E} \kk_{\yy_0\to \yy'} (\xx^*)^{\yy_0} 
            = 
            \sum_{\yy \to \yy_0 \in E} \kk_{\yy \to \yy_0} (\xx^*)^{\yy}.
        }
    \end{enumerate}
\end{defn}

If $\Gk$ admits a detailed-balanced steady state, the network is necessarily reversible; \Cref{eq:DB} balances the fluxes flowing across a reversible pair of edges. If $\Gk$ admits a complex-balanced steady state, the network is weakly reversible~\cite{HornJackson1972}; \Cref{eq:CB} balances the net flux  across a vertex of the graph. A detailed-balanced steady state is also complex-balanced. 

If a mass-action system has a complex-balanced steady state $\xx^*$, then all its positive steady states are complex-balanced~\cite{HornJackson1972}. Its set of positive steady states can be represented as $E_{\vv\kk} = \{ \xx > \vv 0 \colon \ln(\xx) - \ln(\xx^*) \in S\}$, where $S$ is the stoichiometric subspace. Moreover, there is a globally defined Lyapunov function~\cite{HornJackson1972} 
    \eq{ 
        V(\xx) = \sum_i x_i(\ln x_i - \ln x_i^* - 1).
    }
We refer to a mass-action system with a complex-balanced steady state as a \df{complex-balanced system}. The above is also true for detailed-balanced steady states, and \df{detailed-balanced systems}.

In general, a weakly reversible mass-action system may not have a complex-balanced steady state --- similarly for reversible systems and detailed-balanced steady states --- unless the rate constants satisfy additional algebraic constraints~\cites{CraciunDickensteinSturmfelsShiu2009, DickensteinPerezmillan2011, Feinberg1989, Onsager1931, Wegscheider1901, SchusterSchuster1989}. A non-negative integer called the \emph{deficiency} of the network specifies the number of independent algebraic constraints on the rate constants that are necessary and sufficient for a weakly reversible mass-action system to be complex-balanced~\cite{CraciunDickensteinSturmfelsShiu2009}.

\begin{defn}
\label{def:deficiency}
    Let $G$ be a reaction network with $m$ vertices, $\ell$ connected components, and stoichiometric subspace $S$. The \df{deficiency} of the network $G$ is the non-negative integer $\delta = m - \ell - \dim S$. 
\end{defn}

Detailed-balancing, being more restrictive than complex-balancing, requires that the rate constants satisfy the algebraic conditions for complex-balancing, in addition to the \emph{circuit conditions}: for every cycle in the reversible network, the product of rate constants in one direction equals that of the other direction~\cite{DickensteinPerezmillan2011}. In other words, suppose in one orientation of a cycle, the rate constants are $\kk_{1+}$, $\kk_{2+}, \ldots, \kk_{r+}$, and in the other orientation, the rate constants are $\kk_{1-}$, $\kk_{2-}, \ldots, \kk_{r-}$; then the circuit condition along this cycle is
    \eqn{ 
    \label{eq:circuit}
        \prod_{i=1}^r \kk_{i+} = \prod_{i=1}^r \kk_{i-}.
    }

For a reversible system, the algebraic conditions for detailed-balance are also not difficult to state~\cite{DickensteinPerezmillan2011}\footnote{%
    See \cite{Feinberg1989} for a different, but equivalent, set of conditions.
}. %
Choose a forward direction for each reversible pair and let $\kk_{i+}$ be its rate constant; let $\kk_{i-}$ be the rate constant of the backward direction. Suppose the network has $p$ reversible pairs of edges. Let $\mm \Gamma' \in \rr^{n\times p}$ be the matrix whose columns are the reaction vectors of the forward directions.

\begin{thm}
\label{thm:DB-Weg}
    The reversible mass-action system $\Gk$ is detailed-balanced if and only if every $\vv J \in \ker\mm\Gamma' \subseteq \rr^{p}$ satisfies the Wegscheider condition:
        \eq{ 
            \prod_{i=1}^p (\kk_{i+})^{J_i} = \prod_{i=1}^p (\kk_{i-})^{J_i}. 
        }
\end{thm}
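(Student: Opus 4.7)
The plan is to reduce both sides of the equivalence to a single linear-algebraic statement, exploiting the bijection $\xx^* \longleftrightarrow \log(\xx^*)$ between $\rrpp^n$ and $\rr^n$ to linearize the multiplicative detailed-balance equation.

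First, I would fix the chosen forward direction of the $i$-th reversible pair as $\yy_i \to \yy'_i$, so that the $i$-th column of $\mm\Gamma'$ is the reaction vector $\vv v_i = \yy'_i - \yy_i$. Taking logarithms in the detailed-balance condition \eqref{eq:DB} converts $\kk_{i+}(\xx^*)^{\yy_i} = \kk_{i-}(\xx^*)^{\yy'_i}$ into the linear equation $\braket{\vv v_i}{\log(\xx^*)} = \log(\kk_{i+}/\kk_{i-})$ for $i = 1, \ldots, p$. Setting $\vv u := \log(\xx^*) \in \rr^n$ and defining $\vv b \in \rr^p$ by $b_i := \log(\kk_{i+}/\kk_{i-})$, the system $\Gk$ is detailed-balanced if and only if there exists $\vv u \in \rr^n$ with $(\mm\Gamma')^\top \vv u = \vv b$; equivalently, if and only if $\vv b \in \ran((\mm\Gamma')^\top)$.

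Next, I would invoke the orthogonal decomposition $\rr^p = \ran((\mm\Gamma')^\top) \oplus \ker(\mm\Gamma')$: the vector $\vv b$ lies in $\ran((\mm\Gamma')^\top)$ if and only if $\braket{\vv b}{\vv J} = 0$ for every $\vv J \in \ker(\mm\Gamma')$. Writing out this inner product gives $\sum_{i=1}^p J_i \log(\kk_{i+}/\kk_{i-}) = 0$, which exponentiates to exactly the Wegscheider condition $\prod_{i=1}^p \kk_{i+}^{J_i} = \prod_{i=1}^p \kk_{i-}^{J_i}$. This handles both directions simultaneously.

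I do not expect a serious obstacle: the proof is essentially linear-algebraic duality wrapped in a log/exp transformation. The only care needed is bookkeeping—fixing a consistent choice of forward direction for each reversible pair so that the indexing of $\kk_{i\pm}$ matches the columns of $\mm\Gamma'$—and the observation that $\xx^* := \exp(\vv u)$ is automatically positive, so solvability of the linear system genuinely produces an admissible detailed-balanced steady state in $\rrpp^n$.
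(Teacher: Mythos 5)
Your proposal is correct, and it is the standard argument for this result; note that the paper does not supply its own proof of \Cref{thm:DB-Weg} but cites it from the literature, where exactly this log-linearization plus the duality $\rr^p = \ran((\mm\Gamma')^\top)\oplus\ker(\mm\Gamma')$ is used. The only observation you leave implicit is that a positive $\xx^*$ satisfying all the balance equations \eqref{eq:DB} is automatically a steady state (each reversible pair contributes $\bigl(\kk_{i+}(\xx^*)^{\yy_i}-\kk_{i-}(\xx^*)^{\yy'_i}\bigr)(\yy'_i-\yy_i)=\vv 0$ to the right-hand side of \eqref{eq:mas}), which is needed so that solvability of the linear system really yields a detailed-balanced steady state in the sense of \Cref{def:steadystate}.
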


\subsection{Dynamical equivalence}

A recurring theme in detailed-balanced or complex-balanced system is that dynamical behaviour can sometimes be deduced or ruled out by network structure. When studying mass-action kinetics, it is the associated system of differential equations that is of interest. One might ask whether it is possible to attach different network properties to the same system. There are several approaches of attaching a dynamical system to networks, particularly networks with some desirable properties. For example, \emph{network translation} searches for a weakly reversible network that generate a given dynamical system using \emph{generalized mass-action kinetics}~\cite{Johnston2014, JohnstonBurton2019, MuellerRegensburger2012}. Some weakly reversible generalized mass-action systems enjoy the algebraic properties of a  complex-balanced system. Another method, staying within the realm of mass-action systems, is that of \emph{dynamical equivalence}. The associated dynamical system \eqref{eq:mas} of a mass-action system $\Gk$ is uniquely defined; however, different reaction networks can give rise to the same system of differential equations under mass-action kinetics~\cite{cpIdentifiability, CommentOnID, CraciunJinYu2019}. We say that a system of differential equations $\dot{\xx} = \vv f(\xx)$ \df{can be realized} by a network $G$ if there exists a vector of rate constants $\vv\kk > \vv 0$ such that is associated dynamical system of $\Gk$ is $\dot{\xx} = \vv f(\xx)$.

\begin{defn}
\label{def:DE}
    Two mass-action systems $\Gk$ and $G'_{\vv \kk'}$ are \df{dynamically equivalent} if their associated dynamical systems agree on all of $\rrpp^n$. Equivalently, for every vertex $\yy_0 \in V_{G} \cup V_{G'}$, we have 
    \eqn{ \label{eq:DE2} 
        \sum_{\yy_0 \to \yy \in E_{G}} \kk_{\yy_0\to\yy} (\yy - \yy_0)
        = 
        \sum_{\yy_0 \to \yy \in E_{G'}} \kk'_{\yy_0\to\yy} (\yy - \yy_0).
    }  
\end{defn}

In going from the differential equations to the condition \eqref{eq:DE2}, we used the linear independence of the monomials. This approach divorces the non-linearity from the linear part of the problem. Another notion that divorces the non-linear from the linear is that of fluxes. The \emph{flux} of an edge $\yy \to \yy'$ in a mass-action system $\Gk$ is the positive quantity $\kk_{\yy \to \yy'} \xx^{\yy}$ where $\xx > \vv 0$ is a given state. In what follows, $\rrpp^E$ denote the set of vectors with positive coordinates and indexed by $E$. 

\begin{defn}
\label{def:flux}
    A \df{flux vector} $\vv J \in \rrpp^E$ on a reaction network $G = (V,E)$ is a vector of \emph{positive} numbers for each edge. If $\vv J \in \Ker \mm\Gamma \cap \rrpp^E$, then $\vv J$ is a \df{steady state flux}.
\end{defn}

Dynamical equivalence, ultimately a linear property, can be checked using fluxes. If $\Gk$ and $G'_{\vv\kk'}$ are dynamically equivalent, then for every $\yy_0 \in V_G \cup V_{G'}$, we can multiply \eqref{eq:DE2} by $\xx^{\yy_0}$ at any positive state $\xx \in \rrpp^n$. The fluxes $J_{\yy_0 \to \yy} = \kk_{\yy_0\to\yy}\xx^{\yy_0}$ and $J'_{\yy_0 \to \yy} = \kk'_{\yy_0\to\yy}\xx^{\yy_0}$ defined on $G$ and $G'$ respectively satisfy
    \eqn{ \label{eq:FE2} 
        \sum_{\yy_0 \to \yy \in E_{G}} J_{\yy_0\to\yy} (\yy - \yy_0)
        = 
        \sum_{\yy_0 \to \yy \in E_{G'}} J'_{\yy_0\to\yy}  (\yy - \yy_0)
    }  
for every $\yy_0 \in V_G \cup V_{G'}$. We say that the two are \emph{flux equivalent}. Indeed, two mass-action systems are dynamically equivalent if and only if the corresponding fluxes at an arbitrary state satisfy \eqref{eq:FE2} for every vertex. See \cite{CraciunJinYu2019} for the correspondence between mass-action systems and fluxes on a network.

Ultimately dynamical equivalence and flux equivalence are linear feasibility problems. There exist algorithms based on linear programming that search for dynamically equivalent realizations with certain properties, e.g., complex-balanced or detailed-balanced~\cite{SzederkenyiHangos2011}; weakly reversible or reversible~\cite{RudanSzederkenyiKatalinPeni2014}; with minimal deficiency~\cite{JohnstonSiegelSzederkenyi2013, LiptakSzederkenyiHangos2015}. An implementation of some of these algorithms is available as a MATLAB toolbox~\cite{Szederkenyi2012_Toolbox}. Although these algorithms require a predetermined set of vertices as input, in the case of detailed-balanced, complex-balanced, reversible or weakly reversible realizations, it suffices to use the exponents of the monomials in the differential equations~\cite{CraciunJinYu2019}.

\section{Single-target networks}
\label{sec:stn}

In this section, we classify all single-target networks under mass-action kinetics: those that have a globally attracting positive steady state for all choices of positive rate constants, and those that have no positive steady state for any choice of rate constants. The former occurs if and only if the target is in the relative interior of the Newton polytope, the convex hull of the source vertices.  

It is not difficult to show that if every reaction vector points to the relative interior of the Newton polytope (i.e., \lq\lq inward pointing\rq\rq), then the mass-action system is always dynamically equivalent to a weakly reversible system. It follows immediately that the system has a positive steady state~\cite{Boros2019} and is conjectured to be \emph{permanent}~\cite{CraciunNazarovPantea2013}. In the case of a single-target network with \lq\lq inward pointing\rq\rq\ reaction vectors (to be made precise below), we show that the dynamics is essentially that of a detailed-balanced system.  

\begin{defn}
\label{def:stn}
	A reaction network $G = (V,E)$ is a {\df{single-target network}} if there exists a vertex $\vv y^*$ such that $V \setminus \{ \vv y^*\}$ is the set of source vertices, and $E = \{\vv y \FR \vv y^* : \vv y \in V\setminus\{ \vv y^*\} \}$. We call $\vv y^*$ the \df{target vertex}, while the remaining vertices are \df{source vertices}.
\end{defn}

\begin{figure}[h!tbp]
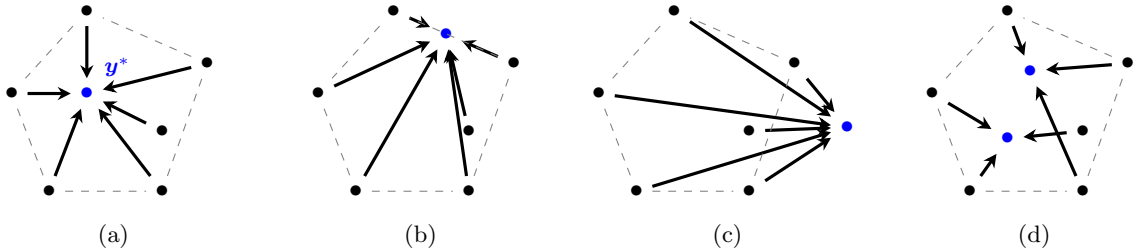

\centering
\begin{subfigure}[b]{0.24\textwidth}
    \centering 
    \input{fig-STN_ex-a}
    \caption{}
    \label{fig:exstn-a}
\end{subfigure}
\begin{subfigure}[b]{0.24\textwidth}
    \centering 
    \input{fig-STN_ex-b}
    \caption{} 
    \label{fig:exstn-b}
\end{subfigure}
\begin{subfigure}[b]{0.24\textwidth}
    \centering 
    \input{fig-STN_ex-c}
    \caption{} 
    \label{fig:exstn-c}
\end{subfigure}
\begin{subfigure}[b]{0.24\textwidth}
    \centering 
    \input{fig-STN_ex-d}
    \caption{} 
    \label{fig:exstn-d}
\end{subfigure}
    \caption{(a) A single-target network that is globally stable under mass-action kinetics. (b)--(c) Single-target networks with no positive steady states. (d) Not a single-target network.}
    \label{fig:exstn}
\end{figure}

\begin{ex}
\label{ex:stn}
	The reaction networks (a)--(c) in \Cref{fig:exstn} are single-target networks, while (d) is not a single-target network. The target vertex of (a) is in the relative interior of its Newton polytope. We will show that network (a) is typical of single-target networks that have exactly one globally stable steady state within each stoichiometric compatibility class, while the networks (b) and (c) have no positive steady state, regardless of the choice of kinetics. The deficiencies of the networks (a)--(c) are $\delta = 6- \dim S$, while that of (d) is $\delta = 7- \dim S$, where $S$ is the stoichiometric subspace. 
\end{ex}

The geometry of a single-target network, i.e., whether the target is in the relative interior of the Newton polytope, determines whether the network admits a steady state flux, which is necessary for the existence a positive steady state under reasonable kinetics. In particular, the geometry can rule out the existence of positive steady states. 

\begin{lem}
\label{lem:stnFluxDB}
	Let $G$ be a single-target network. There exists a steady state flux on $G$ if and only if the target vertex is in the relative interior of its Newton polytope. 
\end{lem}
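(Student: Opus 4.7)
The plan is to unpack the definitions directly: for a single-target network, the kernel condition defining a steady state flux is essentially the same linear relation that expresses the target vertex as a convex combination of the source vertices with strictly positive coefficients. So the proof will amount to translating between these two viewpoints.

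More concretely, let $V_s = \{\yy_1, \ldots, \yy_m\}$ denote the source vertices, so the edge set is $E = \{\yy_i \to \yy^*\}_{i=1}^m$ and a flux vector is a tuple $\vv J = (J_1, \ldots, J_m) \in \rrpp^E$. The stoichiometric matrix $\mm\Gamma$ has $i$th column $\yy^* - \yy_i$, so the condition $\vv J \in \ker \mm\Gamma$ reads
\[
    \sum_{i=1}^m J_i (\yy^* - \yy_i) = \vv 0.
\]
Since $\vv J > \vv 0$, we can divide by $\sum_k J_k > 0$ and rewrite this as
\[
    \yy^* = \sum_{i=1}^m \alpha_i \yy_i, \qquad \alpha_i := \frac{J_i}{\sum_k J_k} > 0, \qquad \sum_{i=1}^m \alpha_i = 1.
\]
This is exactly the defining condition for $\yy^* \in \Int{\newt(G)}$.

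For the forward direction, start from an assumed steady state flux $\vv J$, form the $\alpha_i$'s as above, and read off that $\yy^* \in \Int{\newt(G)}$. For the converse, assume $\yy^* \in \Int{\newt(G)}$, pick positive coefficients $\alpha_i > 0$ summing to $1$ that express $\yy^*$ as a convex combination of the $\yy_i$'s, and set $J_i := \alpha_i$; then $\vv J \in \rrpp^E$ and the displayed identity gives $\mm\Gamma \vv J = \vv 0$. There is no real obstacle here — the statement is essentially a restatement of the definition of the relative interior of a polytope in terms of strictly positive convex combinations, made possible by the very restrictive structure of a single-target network (only one sink, so the kernel equation collapses to a single vector identity rather than one equation per vertex).
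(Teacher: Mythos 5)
Your proof is correct and follows essentially the same route as the paper's: both reduce the kernel condition $\sum_i J_i(\yy^* - \yy_i) = \vv 0$ to the expression of $\yy^*$ as a strictly positive convex combination of the sources by normalizing by $\sum_k J_k$, and both directions are immediate from the paper's definition of $\Int{\newt(G)}$. No gaps.
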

\begin{proof}
Let $\vv y^*$ denote the target vertex of $G$, and enumerate the source vertices as $\vv y_1, \vv y_2,\dots, \vv y_m$. The vector $\vv J = (J_i)_{\vv y_i \FR \vv y^* \in E} \in \rrpp^E$ is a steady state flux if and only if 
    \eq{ 
        \sum_{i=1}^m J_{i} (\vv y^* - \vv y_i ) = \vv 0.
    }
Rearranging, we see that $\yy^* = \sum_i \frac{J_i}{J_T}  \yy_i$, where $J_T = \sum_i J_i$, and each $J_i > 0$. By definition, $\yy^* \in \Int{\newt(G)}$. 
\end{proof}

\begin{rmk}
\label{rmk:notinterior}
At first glance, \Cref{lem:stnFluxDB} is a result about fluxes, with no reference to any underlying kinetics. However, suppose the flux vector arises from any reasonable kinetics, such as mass-action or Michaelis--Menten kinetics --- indeed the argument holds if each reaction rate function is differentiable (or Lipschitz) function mapping a state in $\rrpp^n$ to a positive number. Then for a single-target network whose target vertex is outside the relative interior of the Newton polytope, we can show that the trajectory, starting from any positive initial condition, will simply converge to the boundary of the positive orthant or to infinity. Indeed, there exists a Lyapunov function for such a dynamical system. When the target vertex is not in the relative interior of the convex hull of the sources, i.e.,  $\vv y^* \not\in \Int{\newt(G)}$, geometrically there is a hyperplane $H$ (within the stoichiometric subspace) such that all the reaction vectors $\{ \yy^* - \yy\}$ lie in a halfspace defined by $H$. (See \Cref{fig:exstn-b,fig:exstn-c} for examples of such networks.) Let $\vv w$ be orthogonal to $H$ such that $\braket{\vv w}{\vv y^* - \vv y} \leq 0$ for all reactions $\vv y \to \vv y^*$. Then $V(\xx) = \braket{\xx}{\vv w}$ defines a linear Lyapunov function for the single-target system, and all trajectories must converge to the boundary of the positive orthant or to infinity. 
\end{rmk}

Even with the target vertex in $\Int{\newt(G)}$, to deduce a positive steady state from a steady state flux $\vv J$ involves finding a positive solution $\xx$ to the non-linear equations $J_{\yy \to \yy'} = k_{\yy \to \yy'}\xx^{\yy}$ for every reaction $\yy \to \yy'$. We will prove the existence of steady state for such single-target mass-action systems in \Cref{thm:stnmas}. The result also applies to systems that are dynamically equivalent to a single-target network; for example see \Cref{ex:dense4,ex:dense5}. Our proof of the existence and global stability of a positive state will make use of the following theorems.

\begin{thm}[\cites{PachterSturmfels, Birch1963, HornJackson1972}]
\label{thm:Birch}
	Let $S \subseteq \rr^n$ be a vector subspace, and let $\vv x_0$, $\vv x^* \in \rrpp^n$ be two arbitrary positive vectors. The intersection $(\vv x_0 + S)\cap (\vv x^* \circ \exp S^\perp)$ consists of exactly one point, where 
	    $\vv x^* \circ \exp S^\perp = \{ \vv x^* \circ \exp(\vv s) \st \vv s \in S^\perp\}$. 
\end{thm}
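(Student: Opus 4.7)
The plan is to construct a strictly convex Lyapunov-type function on $\rrpp^n$ whose gradient is exactly $\log(\xx) - \log(\xx^*)$, so that the first-order optimality condition for minimizing it over the positive compatibility class is precisely the condition $\log(\xx) - \log(\xx^*) \in S^\perp$. Concretely, I would take
\[
    V(\xx) \;=\; \sum_{i=1}^n \bigl[\,x_i \log(x_i/x_i^*) - x_i + x_i^*\,\bigr],
\]
the familiar pseudo-Helmholtz function from complex-balanced theory already invoked in the paper. Its Hessian is $\diag(1/x_1,\ldots,1/x_n)$, so $V$ is strictly convex on $\rrpp^n$, and its gradient is $\log(\xx)-\log(\xx^*)$ coordinatewise. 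Hence $\xx \in (\xx_0+S) \cap (\xx^*\circ \exp S^\perp)$ if and only if $\xx \in (\xx_0+S)_>$ is a critical point of $V$ restricted to the affine slice $\xx_0+S$.

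For existence I would work on the closed convex set $C := (\xx_0+S)\cap \rrp^n$, noting that $V$ extends continuously to $\rrp^n$ via $\lim_{x_i \to 0^+}[x_i\log(x_i/x_i^*) - x_i + x_i^*] = x_i^*$. Each summand is bounded below by $0$ and tends to $+\infty$ as $x_i \to \infty$, so the sublevel sets of $V$ on $\rrp^n$ are bounded and hence compact. Continuity of $V$ on $\rrp^n$ together with closedness of $C$ produces a minimizer $\bar\xx \in C$.

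The main obstacle I anticipate is ruling out that this minimizer lies on the relative boundary of $C$. I would handle it by a one-sided derivative argument: if $\bar x_i = 0$ for some $i$, consider the segment $\xx(t) = (1-t)\bar\xx + t \xx_0$, which lies in $C_> = (\xx_0+S)_>$ for $t \in (0,1]$ since $\xx_0 > \vv 0$. The $i$th summand of $V(\xx(t))$ equals $t x_{0,i}\bigl[\log(t x_{0,i}/x_i^*) - 1\bigr] + x_i^*$, whose derivative in $t$ behaves like $x_{0,i}\log(tx_{0,i}/x_i^*) \to -\infty$ as $t \to 0^+$, while every other summand has a finite right derivative at $t=0$. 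Hence $\tfrac{d}{dt}V(\xx(t))|_{t=0^+} = -\infty$, contradicting minimality of $\bar\xx$. Therefore $\bar\xx \in (\xx_0+S)_>$, and by strict convexity of $V$ on the convex set $(\xx_0+S)_>$ any two critical points must coincide. This gives both existence and uniqueness of a point in $(\xx_0+S)\cap(\xx^*\circ\exp S^\perp)$.
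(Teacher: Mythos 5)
Your proof is correct. Note that the paper itself does not prove \Cref{thm:Birch}; it is quoted as a known result (Birch's theorem) with citations to Birch, Horn--Jackson, and Pachter--Sturmfels, so there is no in-paper argument to compare against. What you have written is essentially the classical variational proof from those sources: minimize the pseudo-Helmholtz function $V$ over the closed compatibility class, identify the condition $\log(\xx)-\log(\xx^*)\in S^\perp$ with criticality of $V$ on the affine slice, exclude boundary minimizers via the $-\infty$ one-sided derivative of $x\log x$ at $0$, and get uniqueness from strict convexity. All the steps check out: the gradient computation, the coercivity and lower-semicontinuity giving a minimizer on $(\xx_0+S)\cap\rrp^n$, and the segment argument toward $\xx_0>\vv 0$. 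One small imprecision: if $\bar\xx$ has \emph{several} vanishing coordinates, it is not true that ``every other summand has a finite right derivative at $t=0$'' --- each vanishing coordinate contributes a difference quotient tending to $-\infty$ --- but this only strengthens the conclusion that $V(\xx(t))<V(\bar\xx)$ for small $t>0$, so the contradiction stands. It would also be worth stating explicitly that every point of $\xx^*\circ\exp S^\perp$ is automatically in $\rrpp^n$, so the intersection in question coincides with the set of positive critical points of $V$ restricted to $\xx_0+S$; with that remark your existence and uniqueness argument covers the full claim.
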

\begin{thm}[\cites{Anderson2011_GAC, BorosHofbauer2019}]
\label{thm:GAC1lnk}
    Let $G_{\vv\kk}$ be a complex-balanced system with one connected component. Any positive steady state is a global attractor within its stoichiometric compatibility class.
\end{thm}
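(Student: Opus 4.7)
The plan is to prove global convergence via the Horn--Jackson pseudo-Helmholtz Lyapunov function, combined with an exclusion of boundary $\omega$-limit points tailored to the single-linkage-class structure.

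I would begin by fixing a positive complex-balanced steady state $\xx^*$ and introducing
$V(\xx) = \sum_{i=1}^n \bigl[ x_i(\ln x_i - \ln x_i^* - 1) + x_i^* \bigr]$.
Differentiating $V$ along \eqref{eq:mas}, grouping the derivative by vertex, and applying the complex-balanced equations \eqref{eq:CB} together with the elementary inequality $a - b - b\ln(a/b) \geq 0$ for $a,b>0$, I would obtain $\dot V(\xx)\leq 0$ on all of $\rrpp^n$, with equality exactly on the set of positive complex-balanced steady states. Because that set is $\xx^*\circ \exp S^\perp$, \Cref{thm:Birch} then gives a unique positive steady state in each stoichiometric compatibility class.

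Strict convexity of $V$ and compactness of its sublevel sets inside the compatibility class imply that every forward trajectory stays precompact and has a non-empty $\omega$-limit set. LaSalle's invariance principle, together with the uniqueness just established, then reduces the theorem to showing that no $\omega$-limit point of an interior trajectory lies on the boundary of $\rrp^n$. Supposing for contradiction that such a boundary point $\vv z$ existed, its zero set $W=\{i:z_i=0\}$ would have to be a \emph{siphon} of the network: every reaction producing a species of $W$ would also have to consume one. Using weak reversibility together with the single-connected-component hypothesis, the structure of siphons can be analysed essentially combinatorially on the one linkage class, and one can construct an auxiliary function that diverges as $\xx$ approaches the face $\{x_i=0 : i\in W\}$. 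A Dini-derivative estimate on this auxiliary function, using the dominant reactions near the face, would then show that trajectories are repelled from any such boundary stratum, contradicting $\vv z$ being an $\omega$-limit point.

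I expect this final boundary-repulsion step to be the main obstacle. The Lyapunov descent and the Birch-based uniqueness are routine, but excluding boundary $\omega$-limit points in arbitrary weakly reversible systems is precisely the content of the Global Attractor Conjecture; the one-linkage-class assumption is what makes the siphon analysis tractable and what distinguishes the arguments of Anderson and of Boros--Hofbauer from the general open case.
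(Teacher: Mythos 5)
First, note that the paper does not prove this statement at all: it is imported verbatim from \cite{Anderson2011_GAC} and \cite{BorosHofbauer2019}, so there is no internal proof to compare against. Your proposal must therefore stand on its own, and as written it does not: it is an accurate \emph{outline} of the known strategy, but the one step that constitutes the actual content of the theorem is asserted rather than carried out. The Lyapunov descent of $V$, the identification of the zero set of $\dot V$ with $\xx^*\circ\exp S^\perp$, the Birch-type uniqueness via \Cref{thm:Birch}, and the LaSalle reduction are all classical Horn--Jackson material and already give local asymptotic stability for \emph{any} complex-balanced system, with or without the single-linkage-class hypothesis. What remains --- showing that no $\omega$-limit point of an interior trajectory lies on $\partial\rrp^n$ or at infinity --- is exactly the Global Attractor Conjecture restricted to one linkage class, and your treatment of it consists of the sentences ``one can construct an auxiliary function that diverges'' and ``a Dini-derivative estimate \dots would then show that trajectories are repelled.'' Neither object is constructed, and no estimate is given. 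Each of the two cited papers is devoted essentially in its entirety to supplying precisely this step, by quite different and genuinely delicate arguments (Anderson partitions the species according to their rates of decay along a putative boundary-approaching sequence and derives a contradiction from the Lyapunov function values; Boros and Hofbauer give a different, more explicit estimate). The observation that the zero set $W$ of a boundary $\omega$-limit point must be a siphon is correct but is nowhere near sufficient: siphons can be ``dynamically non-emptiable'' in general weakly reversible networks too, and the single-linkage-class hypothesis enters the real proofs in a much more analytic way than ``the siphon structure can be analysed combinatorially.''

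Two smaller points. Your compactness claim handles boundedness of trajectories (sublevel sets of $V$ intersected with the closed compatibility class are compact), but precompactness \emph{in $\rrpp^n$} is exactly what is at issue, so the phrase ``every forward trajectory stays precompact'' presumes part of what must be proved. And in the setting of this paper the vertices live in $\rr^n$ rather than $\zzp^n$, so the systems are power-law rather than classical mass-action; the paper flags this in a footnote and the cited theorems must be checked to apply in that generality. None of this makes your plan wrong --- it is the right plan --- but as a proof it has a gap located at the theorem's only hard point.
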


We now give a necessary and sufficient condition for a single-target network to be dynamically equivalent to a detailed-balanced system under mass-action kinetics. This result is related to the theory of \emph{star-like networks}~\cite{FeinbergBook}, which have been shown to have a unique asymptotically stable steady state within each stoichiometric compatibility class. In what follows, $\rrpp^E$ denote the set of vectors of rate constants, indexed by $E$.

\begin{thm}
\label{thm:stnmas-stable}
    Let $G = (V,E)$ be a single-target network whose target vertex is in the relative interior of the Newton polytope. Then for any vector of rate constants $\vv \kk \in \rrpp^{E}$, the mass-action system $G_{\vv \kk}$ is dynamically equivalent to a detailed-balanced system that has a single connected component. 
\end{thm}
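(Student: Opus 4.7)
The plan is to first exhibit a positive steady state $\xx^*$ of $G_{\vv\kk}$ via convex optimization, and then read off from $\xx^*$ a detailed-balanced realization on the reversible closure of $G$. Once a steady state is in hand, the construction of the realization is essentially forced and its verification is routine; the core of the proof is therefore the existence of $\xx^*$.

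Enumerate the source vertices as $\yy_1, \ldots, \yy_m$, write $\vv v_i := \yy_i - \yy^*$, and consider
\begin{align*}
    \Phi(\vv z) := \sum_{i=1}^m \kk_i \exp\braket{\vv v_i}{\vv z}, \qquad \vv z \in \rr^n.
\end{align*}
This function is smooth and convex as a positive combination of exponentials, and its gradient $\nabla \Phi(\vv z) = \sum_{i} \kk_i e^{\braket{\vv v_i}{\vv z}} \vv v_i$ lies in the stoichiometric subspace $S := \Span\{\vv v_i\}$. On $S$, $\Phi$ is strictly convex because the $\vv v_i$ span $S$. The hypothesis $\yy^* \in \Int{\newt(G)}$ becomes, after translation by $-\yy^*$, the statement that $\vv 0$ lies in the relative interior (in $S$) of the convex hull of $\{\vv v_i\}$; equivalently, every nonzero $\vv u \in S$ satisfies $\braket{\vv v_i}{\vv u} > 0$ for at least one $i$. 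This yields coercivity of $\Phi|_S$, so $\Phi|_S$ attains a unique minimizer $\vv z^*$; since $\nabla \Phi$ already takes values in $S$, the vanishing of the directional derivatives in $S$ forces $\nabla \Phi(\vv z^*) = \vv 0$. Setting $\xx^* := \exp(\vv z^*)$ then yields $\sum_{i} \kk_i (\xx^*)^{\yy_i}(\yy^* - \yy_i) = \vv 0$, so $\xx^*$ is a positive steady state of $G_{\vv\kk}$.

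Next, I would read off the detailed-balanced realization. Let $G'$ be the reversible network on the same vertex set with edges $\{\yy_i \RR \yy^*\}_{i=1}^m$; this has a single connected component. Assign forward rates $a_i := \kk_i$ and backward rates $b_i := \kk_i (\xx^*)^{\yy_i - \yy^*}$. Detailed balance at $\xx^*$ holds by design, since $a_i(\xx^*)^{\yy_i} = b_i(\xx^*)^{\yy^*}$. The dynamical equivalence condition \eqref{eq:DE2} at each source $\yy_i$ reduces to $a_i = \kk_i$ (satisfied by construction), while at $\yy^*$ it reduces to $\sum_{i} b_i(\yy_i - \yy^*) = \vv 0$, which is exactly the steady-state identity from the previous step rescaled by $(\xx^*)^{-\yy^*}$. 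Hence $G'_{\vv\kk'}$ is a detailed-balanced system with one linkage class that is dynamically equivalent to $G_{\vv\kk}$.

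The main obstacle is producing $\xx^*$: any attempt to define the backward rates at an arbitrary reference point $\xx > \vv 0$ is forced by detailed balance, and the leftover requirement, that the backward rates sum to zero against the reaction vectors $\yy_i - \yy^*$, collapses exactly to the steady-state equation for $G_{\vv\kk}$. The relative-interior hypothesis enters only at the coercivity step; without it, $\Phi|_S$ would be monotone along some ray in $S$ and the minimizer would escape to infinity, consistent with Remark 3.4, which rules out steady states in that case.
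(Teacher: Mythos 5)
Your proof is correct, but it reaches the detailed-balanced realization by a genuinely different route than the paper. The paper sets up the same two conditions --- dynamical equivalence at $\yy^*$, which says $\vv\kk'\in\Ker\mm\Gamma$, and detailed balance at some positive $\xx$, which says $\vv\kk'\in\vv\kk\circ\exp(\Ran\mm\Gamma^\top)$ --- but then works entirely in the $m$-dimensional space of rate constants and fluxes: it invokes \Cref{lem:stnFluxDB} to produce a steady state flux $\vv J\in\Ker\mm\Gamma\cap\rrpp^m$ and cites Birch's theorem (\Cref{thm:Birch}) to conclude that $(\vv J+\Ker\mm\Gamma)\cap(\vv\kk\circ\exp(\Ker\mm\Gamma^\perp))$ is nonempty. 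You instead work in concentration space: you prove directly that $G_{\vv\kk}$ has a positive steady state $\xx^*$ by minimizing the convex, coercive function $\Phi(\vv z)=\sum_i\kk_i e^{\braket{\yy_i-\yy^*}{\vv z}}$ over the stoichiometric subspace $S$, after which the forced choice $\kk'_i=\kk_i(\xx^*)^{\yy_i-\yy^*}$ satisfies both conditions simultaneously. The two arguments are essentially dual: your variational construction is the standard proof of the special case of Birch's theorem that the paper uses as a black box, so your write-up is self-contained where the paper's is not, and it isolates exactly where the relative-interior hypothesis enters (coercivity of $\Phi|_S$, via the fact that every nonzero $\vv u\in S$ satisfies $\braket{\yy_i-\yy^*}{\vv u}>0$ for some $i$). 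What the paper's formulation buys in exchange is brevity, uniqueness of the intersection point (hence a canonical $\vv\kk'$), and a clean reuse of \Cref{lem:stnFluxDB}, which already characterizes when the flux cone is nonempty. The only step you leave compressed is the coercivity of $\Phi|_S$, which needs a compactness argument over the unit sphere of $S$ to obtain a uniform positive lower bound on $\max_i\braket{\yy_i-\yy^*}{\vv u}$; this is routine and does not affect correctness.
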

\begin{proof}
	Let $\vv y^*$ denote the target vertex, and enumerate the source vertices $\vv y_1,\vv y_2,\dots, \vv y_m$. Let $\mm\Gamma \in \rr^{n \times m}$ be the stoichiometric matrix, whose $j$th column is the reaction vector $\vv y^* - \vv y_j$. 
	Let $\kk_j > 0$ be an arbitrary rate constant for the edge $\vv y_j \FR \vv y^*$, and let $\vv\kk = (\kk_j)_{j=1}^m$. Recall that the relative interior of the Newton polytope is 
    \eq{ 
	    \Int{\newt(G)} = \left\{ 
	    \sum_{j=1}^m \alpha_j \yy_j :
	        \alpha_j > 0
	         \text{ and } 
	        \sum_{j=1}^m \alpha_j = 1 
	    \right\}. 
	}
	
	We want to prove that $G_{\vv \kk}$ is dynamically equivalent to a detailed-balanced system with vertex set $V_{G'} = V_G$ and edge set $E_{G'} = E_G \cup \{ \yy^* \to \yy_j\}_{j=1}^m$. Moreover, for the original edges $\yy_j \to \yy^*$, we keep the same rate constants $\kk_j$. Let $\kk'_j$ denote the rate constant of the reversible edge $\yy^* \to \yy_j$, whose value is to be determined. Consider the following conditions with unknowns $\kk'_j > 0$ and $\xx \in \rrpp^n$: 
	\eqn{
		&  \sum_{j=1}^m \kk'_j (\vv y_j - \vv y^*) = \vv 0 , 
		    \label{pf:DE}
		\\
		&
		\kk_j \vv x^{\vv y_j} = \kk'_j \vv x^{\vv y^*}
			\qquad \text{for all $1 \leq j\leq m$}.
			\label{pf:DB}
	}
    The condition \eqref{pf:DE} ensures that the resulting system is dynamically equivalent to the original since the only difference between the two networks are the edges with source $\yy^*$. The condition \eqref{pf:DB} ensures that resulting system is a detailed-balanced system with positive steady state $\xx$. Condition \eqref{pf:DE} can be replaced with $\vv \kk' = (\kk'_j)_{j=1}^m \in \Ker \mm\Gamma$. Isolating $\kk_j'$ in condition \eqref{pf:DB}, we obtain
	\eq{
		\kk'_j &= \kk_j \vv x^{\vv y_j - \vv y^*}
		= \kk_j e^{\braket{\vv y_j -\vv y^*}{  \log \vv x} }.
	} 
    So \eqref{pf:DB} is equivalent to $\vv \kk' \in \vv \kk \circ \exp(\Ran \mm\Gamma^\top)$. 
    Therefore, that $\Gk$ is dynamically equivalent to a detailed-balanced system follows from the existence of $\vv\kk'$ in the intersection $\Ker \mm\Gamma \cap (\vv \kk \circ \exp(\Ran \mm\Gamma^\top)) \subseteq \rrpp^{m}$. 
	
	By \Cref{lem:stnFluxDB}, there exists a steady state flux $\vv J$ on $G$, i.e., $\vv J \in \Ker \mm\Gamma \cap \rrpp^m$. Hence, 
	\eq{
	    \Ker \mm\Gamma \cap (\vv \kk \circ \exp(\Ran \mm\Gamma^\top))
	    &= (\vv J + \Ker \mm\Gamma) \cap (\vv \kk \circ \exp(\Ker \mm\Gamma^\perp)),
	}
    which is guaranteed to be non-empty for any positive $\vv J$, $\vv \kk$ by \Cref{thm:Birch}~\cites{Birch1963, HornJackson1972}. Let $\vv\kk' = (\kk'_j)_{j=1}^m$ be in the intersection. Therefore, there exist positive solutions $\xx \in \rrpp^n$ and $\kk_j > 0$ satisfying conditions \eqref{pf:DE}--\eqref{pf:DB}. The graph $G'$ consists of the original edges $\yy_j \to \yy^*$ with the original rate constants $\kk_j > 0$ and the edges $\yy^* \to \yy_j$ with rate constants $\kk'_j>0$. In other words, $G_{\vv\kk}$ is dynamically equivalent to a detailed-balanced system $G'_{\tilde{\vv\kk}}$, where $G'$ is strongly connected and $\tilde{\vv\kk} \in \rrpp^{E'}$ has coordinates given by $\vv\kk$ and $\vv\kk'$.
\end{proof}

\begin{thm}
\label{thm:stnmas}
	Let $G = (V,E)$ be a single-target network. For any vector of rate constants $\vv \kk \in \rrpp^{E}$, let $G_{\vv \kk}$ denote the corresponding  mass-action system. Then exactly one of the following is true.
	\begin{enumerate}
	\item
		For any $\vv \kk$, the mass-action system $G_{\vv \kk}$ has no positive steady states and all trajectories must converge to the boundary of the positive orthant or to infinity.
	\item
		For any $\vv \kk$, the mass-action system $G_{\vv \kk}$ has exactly one positive steady state within each of its stoichiometric compatibility class. Furthermore, this steady state is globally stable within its class.
	\end{enumerate} 
The latter occurs if and only if the target vertex of $G$ is in the relative interior of the Newton polytope.
\end{thm}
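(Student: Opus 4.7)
The plan is to dispatch the theorem as a dichotomy based on the geometric position of the target vertex $\vv y^*$, using \Cref{lem:stnFluxDB} to separate the two cases and \Cref{thm:stnmas-stable} to handle the "interior" case.

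First I would handle the case $\vv y^* \notin \Int{\newt(G)}$. By \Cref{lem:stnFluxDB}, there is no positive steady-state flux on $G$, so for any $\vv \kk \in \rrpp^E$ and any $\vv x \in \rrpp^n$ the vector $(\kk_j \vv x^{\yy_j})_j$ is not in $\Ker \mm \Gamma$; equivalently, the right-hand side of \eqref{eq:mas} cannot vanish at a positive state, so $G_{\vv\kk}$ admits no positive steady state regardless of $\vv \kk$. For the convergence claim, I would invoke the separating hyperplane discussion of \Cref{rmk:notinterior}: because $\vv y^*$ does not lie in the relative interior of $\newt(G)$, there exists $\vv w \in \rr^n$ with $\braket{\vv w}{\vv y^* - \vv y_j} \leq 0$ for every source $\vv y_j$, with strict inequality for at least one $j$. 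Then $V(\xx) = \braket{\xx}{\vv w}$ satisfies $\dot V = \sum_j \kk_j \xx^{\yy_j}\braket{\vv w}{\vv y^* - \yy_j} \leq 0$ along trajectories, and is strictly decreasing as long as $\xx$ stays bounded away from both the boundary and infinity. A standard LaSalle-type argument (using that the strictly active reaction prevents $\dot V = 0$ in the interior) then forces any positive trajectory to leave every compact subset of $\rrpp^n$.

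Next I would handle the case $\vv y^* \in \Int{\newt(G)}$, which is the substantive half. Here \Cref{thm:stnmas-stable} produces, for each $\vv \kk \in \rrpp^E$, a dynamically equivalent detailed-balanced realization $G'_{\tilde{\vv \kk}}$ on the same vertex set, with a single connected component. Since $G$ and $G'$ induce identical ODEs on $\rrpp^n$, they share the same positive steady states and the same stoichiometric subspace $S = \Ran \mm \Gamma$; in particular, stoichiometric compatibility classes for the two systems coincide. A detailed-balanced steady state is complex-balanced, so \Cref{thm:GAC1lnk} applies to $G'_{\tilde{\vv\kk}}$ and yields that every positive steady state is a global attractor within its compatibility class. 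Uniqueness within a class follows from the standard complex-balanced description of the steady-state set as $\{\xx > \vv 0 : \log\xx - \log\xx^* \in S^\perp\}$ intersected with $\xx_0 + S$, which is a single point by Birch's \Cref{thm:Birch}. Transferring these conclusions back across the dynamical equivalence gives statement (2) of the theorem for $G_{\vv \kk}$.

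Finally I would observe that the two cases are mutually exclusive and exhaustive, and that case (2) occurs precisely when $\vv y^* \in \Int{\newt(G)}$: \Cref{lem:stnFluxDB} supplies the forward implication (interior position is necessary even for a steady-state flux to exist), while \Cref{thm:stnmas-stable} supplies the converse. The main technical obstacle I anticipate is the convergence claim in case (1): \Cref{rmk:notinterior} asserts the existence of a linear Lyapunov function, but converting monotonicity of $V$ into actual escape from compact subsets of $\rrpp^n$ requires ruling out trajectories that hover on the level set $\dot V = 0$, which I would argue by noting that on such a level set every reaction satisfying $\braket{\vv w}{\yy^* - \yy_j} < 0$ must carry zero flux, forcing $\xx$ toward $\partial \rrp^n$.
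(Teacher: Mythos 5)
Your proposal is correct and follows essentially the same route as the paper: \Cref{lem:stnFluxDB} together with the linear Lyapunov function of \Cref{rmk:notinterior} for the case $\vv y^* \notin \Int{\newt(G)}$, and \Cref{thm:stnmas-stable} combined with \Cref{thm:GAC1lnk} (with uniqueness via \Cref{thm:Birch}) for the interior case. The only difference is that you spell out the escape-from-compact-sets argument that the paper delegates to \Cref{rmk:notinterior}, and your resolution there is sound since on any compact subset of $\rrpp^n$ the strictly active reaction makes $\dot V$ bounded away from zero.
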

\begin{proof}
	If $\vv y^* \not\in \Int{\newt(G)}$, by \Cref{lem:stnFluxDB} and \Cref{rmk:notinterior} the network $G$ admits no positive flux vector, i.e., $\Ker \mm\Gamma \cap \rrpp^E = \emptyset$; therefore, any mass-action system generated by $G$ cannot have a positive steady state and all trajectories must converge to the boundary of the positive orthant or to infinity. However, if $\vv y^* \in \Int{\newt(G)}$, then by \Cref{thm:stnmas-stable} the mass-action system is dynamically equivalent to a detailed-balanced system with one connected component regardless of the choice of rate constants. Since detailed-balanced systems are complex-balanced, this system, with one connected component, has within each of its stoichiometric compatibility class exactly one positive steady state, which is globally stable, as stated in \Cref{thm:GAC1lnk}~\cites{Anderson2011_GAC, BorosHofbauer2019}.
\end{proof}

\begin{ex}
	Consider the single-target networks in \hyperref[fig:exstn]{Figures 1(a)--(c)}. Mass-action systems generated by networks (b) and (c) can never have positive steady states, while systems generated by the network (a) would not only have exactly one positive steady state within every stoichiometric compatibility class, but that the steady state is globally stable within its stoichiometric compatibility class.
\end{ex}

\begin{figure}[h!tb]
  \centering\setlength\tabcolsep{1.5em}
  \begin{tabular}{ccc}
    \multirow{2}{*}[0.9cm]{\subcaptionbox{\label{fig-tri-full}}{
        \begin{tikzpicture}[scale=1]
            \draw [step=1, gray, very thin] (0,0) grid (2.75,2.5);
    \draw [ ->, black!70!white] (0,0)--(2.75,0);
    \draw [ ->, black!70!white] (0,0)--(0,2.5);

    \node (3) at (2,0) {$\bullet$}; 
    \node (2) at (1,1) {$\bullet$}; 
    \node (1) at (0,2) {$\bullet$}; 
    
    \node [left =-3pt of 1] {$\yy_1$}; 
    \node [above right =-5pt of 2] {$\yy_2$}; 
    \node [below =-3pt of 3] {$\yy_3$}; 
    
    \draw [fwdrxn, transform canvas={xshift=-1pt,yshift=-1pt}] (1) -- (2) ;
    \draw [fwdrxn, transform canvas={xshift=1pt,yshift=1pt}] (2) -- (1) ;
    \draw [fwdrxn, transform canvas={xshift=-1pt,yshift=-1pt}] (2) -- (3) ;
    \draw [fwdrxn, transform canvas={xshift=1pt,yshift=1pt}] (3) -- (2) ;
    \draw [fwdrxn, transform canvas={xshift=-3pt,yshift=-3pt}] (3) -- (1) ;
    \draw [fwdrxn, transform canvas={xshift=3pt,yshift=3pt}] (1) -- (3) ;
        \end{tikzpicture}
    }}
    & \subcaptionbox{\label{fig-triA1}}{
        \begin{tikzpicture}[scale=0.7]
            \draw [step=1, gray, very thin] (0,0) grid (2.75,2.5);
    \draw [ ->, black!70!white] (0,0)--(2.75,0) node [right] {};
    \draw [ ->, black!70!white] (0,0)--(0,2.5);

    \node [inner sep=0pt, outer sep=0pt] (3) at (2,0) {$\bullet$}; 
    \node [inner sep=0pt, outer sep=0pt] (2) at (1,1) {$\bullet$}; 
    \node [inner sep=0pt, outer sep=0pt] (1) at (0,2) {$\bullet$};

    \draw [thick, arrows={-Stealth[length=5pt,width=5pt]}, red!50!black] (1)--(0.35,1.65);
    \draw [thick, arrows={-Stealth[length=5pt,width=5pt]}, red!50!black] (2)--(1.35,0.65);
    \draw [thick, arrows={-Stealth[length=5pt,width=5pt]}, red!50!black] (3)--(1.65,0.35);
        \end{tikzpicture}
        }
    & \subcaptionbox{\label{fig-triA2}}{
        \begin{tikzpicture}[scale=0.7]
            \draw [step=1, gray, very thin] (0,0) grid (2.75,2.5);
    \draw [ ->, black!70!white] (0,0)--(2.75,0) node [right] {};
    \draw [ ->, black!70!white] (0,0)--(0,2.5);

    \node [inner sep=0pt, outer sep=0pt] (3) at (2,0) {$\bullet$}; 
    \node [inner sep=0pt, outer sep=0pt] (2) at (1,1) {$\bullet$}; 
    \node [inner sep=0pt, outer sep=0pt] (1) at (0,2) {$\bullet$}; 
    
    \node [inner sep=0pt, outer sep=0pt, blue] (t) at (1.5,0.5) {$\bullet$}; 
    
    
    \draw [thick, arrows={-Stealth[length=5pt,width=5pt]}, transform canvas={xshift=-1pt,yshift=-1pt}] (2) -- (t) ;
    \draw [thick, arrows={-Stealth[length=5pt,width=5pt]}, transform canvas={xshift=-1pt,yshift=-1pt}] (3) -- (t) ;
    \draw [thick, arrows={-Stealth[length=5pt,width=5pt]}, transform canvas={xshift=2pt,yshift=2pt}] (1) -- (t) ;
        \end{tikzpicture}
        }
    \\
    & \subcaptionbox{\label{fig-triB1}}{
        \begin{tikzpicture}[scale=0.7]
            \draw [step=1, gray, very thin] (0,0) grid (2.75,2.5);
    \draw [ ->, black!70!white] (0,0)--(2.75,0) node [right] {};
    \draw [ ->, black!70!white] (0,0)--(0,2.5);

    \node [inner sep=0pt, outer sep=0pt] (3) at (2,0) {$\bullet$}; 
    \node [inner sep=0pt, outer sep=0pt] (2) at (1,1) {$\bullet$}; 
    \node [inner sep=0pt, outer sep=0pt] (1) at (0,2) {$\bullet$}; 
    
    \draw [thick, arrows={-Stealth[length=5pt,width=5pt]}, red!50!black] (1)--(0.35,1.65);
    \draw [thick, arrows={-Stealth[length=5pt,width=5pt]}, red!50!black] (2)--(0.65,1.35);
    \draw [thick, arrows={-Stealth[length=5pt,width=5pt]}, red!50!black] (3)--(1.65,0.35);
        \end{tikzpicture}
        }
    & \subcaptionbox{\label{fig-triB2}}{
        \begin{tikzpicture}[scale=0.7]
            \draw [step=1, gray, very thin] (0,0) grid (2.75,2.5);
    \draw [ ->, black!70!white] (0,0)--(2.75,0) node [right] {};
    \draw [ ->, black!70!white] (0,0)--(0,2.5);

    \node [inner sep=0pt, outer sep=0pt] (3) at (2,0) {$\bullet$}; 
    \node [inner sep=0pt, outer sep=0pt] (2) at (1,1) {$\bullet$}; 
    \node [inner sep=0pt, outer sep=0pt] (1) at (0,2) {$\bullet$}; 
    
    \node [inner sep=0pt, outer sep=0pt, blue] (t) at (0.5,1.5) {$\bullet$}; 
    
    
    \draw [thick, arrows={-Stealth[length=5pt,width=5pt]}, transform canvas={xshift=1pt,yshift=1pt}] (2) -- (t) ;
    \draw [thick, arrows={-Stealth[length=5pt,width=5pt]}, transform canvas={xshift=-2pt,yshift=-2pt}] (3) -- (t) ;
    \draw [thick, arrows={-Stealth[length=5pt,width=5pt]}, transform canvas={xshift=1pt,yshift=1pt}] (1) -- (t) ;
        \end{tikzpicture}
        }
  \end{tabular}
  \caption{Consider subnetworks of (a) under mass-action kinetics, whose associated dynamics is given by \eqref{eq:tri}. If the coefficient of $\xx^{\yy_1}$ in $\dot{x}$ is positive and the coefficient of $\xx^{\yy_3}$ in $\dot{x}$ is negative, then the system  \eqref{eq:tri} can be realized by a single-target network, determined by the sign of $\xx^{\yy_2}$ in $\dot{x}$. If the net directions are as shown in (b), then \eqref{eq:tri} can be realized by the single-target network in (c). Similarly, if the net directions appear as in (d), then \eqref{eq:tri} can be realized by the network in (e). 
  }
  \label{fig:tri}
\end{figure}
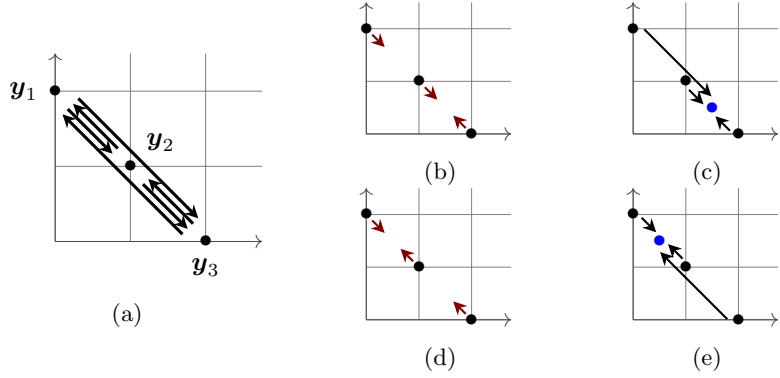

\begin{ex}\label{ex:trionline}
    Consider the complete graph on the vertices 
    \eq{ 
        \yy_1 = \begin{pmatrix} 0 \\ 2 \end{pmatrix},
        \quad 
        \yy_2 = \begin{pmatrix} 1 \\ 1 \end{pmatrix},
        \quad 
        \yy_3 = \begin{pmatrix} 2 \\ 0 \end{pmatrix},
    }
    as shown in \Cref{fig-tri-full}. In \cite{disguised_toric}, this network under mass-action kinetics, was shown to be dynamically equivalent to a complex-balanced system for any vector of positive rate constants. We claim that under mass-action kinetics, any subnetwork for which $\yy_1 = (0,2)^\top$ and $\yy_3 = (2,0)^\top$ are sources, can be realized by a single-target network, and is dynamically equivalent to a detailed-balanced system.

    Let $\kk_{ij} \geq 0$ be the rate constant (if non-zero) of the edge $\yy_i \to \yy_j$. The associated dynamical system 
    \eqn{\label{eq:tri} \begin{array}{l}\ds 
        \frac{dx}{dt} = \hphantom{-{}}y^2(\kk_{12} + 2 \kk_{13} )
            + xy(-\kk_{21} + \kk_{23})
            + x^2 (-\kk_{32} -2\kk_{31})
        \\[8pt] \ds 
        \frac{dy}{dt} = -y^2(\kk_{12} + 2 \kk_{13} )
            - xy(-\kk_{21} + \kk_{23})
            - x^2 (-\kk_{32} -2\kk_{31})
        \end{array}
    }
    is a homogeneous degree two polynomial system, where we assume $\kk_{12} +  \kk_{13} > 0$ and $\kk_{32} + \kk_{31} > 0$. The sign of $-\kk_{12}+\kk_{23}$ determines the structure of the single-target network that the mass-action system is dynamically equivalent to. Consider the net direction from each vertex, given by the weighted sum of reaction vectors originating from that vertex with weights given by the rate constants. If $-\kk_{12}+\kk_{23} \geq 0$, the net direction from each vertex is shown in \Cref{fig-triA1} (with possibly nothing from $\yy_2$). Then the system can be realized by the single-target network in \Cref{fig-triA2}. Denote by $\kk'_{i}$ the rate constant from $\yy_i$ to the target $(1.5,0.5)^\top$; the rate constants for the system on \Cref{fig-triA2} are  
        \eq{ 
            \kk'_1 = \frac{2}{3} (\kk_{12} + 2\kk_{13}), 
            \quad 
            \kk'_2 = 2(-\kk_{12} + \kk_{23}),
            \quad 
            \kk'_3 = 2(\kk_{32} + 2 \kk_{31}). 
        }
    A similar argument shows that if $-\kk_{12} + \kk_{23} < 0$, the net direction from each vertex is shown in \Cref{fig-triB1}. The system can be realized by the single-target network in \Cref{fig-triB2}, with rate constants 
        \eq{ 
            \kk'_1 = 2 (\kk_{12} + 2\kk_{13}), 
            \quad 
            \kk'_2 = 2(\kk_{12} - \kk_{23}),
            \quad 
            \kk'_3 =  \frac{2}{3}(\kk_{32} + 2 \kk_{31}).
        }
    This follows by considering linear equations coming from each of the three source vertices. For example, at $\yy_1$, dynamical equivalence dictates that 
        \eq{ 
            \kk_{12} \begin{pmatrix*}[r] 1 \\ -1 \end{pmatrix*} + \kk_{13} \begin{pmatrix*}[r] 2 \\ -2 \end{pmatrix*} = \kk'_1 \begin{pmatrix*}[r] 3/2 \\ -3/2 \end{pmatrix*}, 
        }
    from which one can easily solve for $\kk'_1$. Similar considerations at the other source vertices provide the remaining rate constants.

    This example can be extended to homogeneous polynomials of two variables. Order the terms of such a polynomial $p(x,y)$ by ascending degree of $x$. If the coefficient of the first term is positive, coefficient for the last term is negative, and there is exactly one sign change between consecutive terms, then for any positive initial condition, the system 
        \eq{ 
            \frac{dx}{dt} = p(x,y) \quad \text{and} \quad  \frac{dy}{dt} = -p(x,y)
        }
    has exactly one positive steady state, which is globally stable. Indeed, the system can be realized by a single-target network, and is dynamically equivalent to a detailed-balanced system. 
\end{ex}

\begin{ex}
    In this example, we consider non-linear dynamical systems on $\rrpp^n$ of the form
    \eqn{ \label{eq:decay}
        \frac{d\xx}{dt} = \sum_{i=1}^m - \kk_i \xx^{\yy_i} \yy_i,
    }
    where $\kk_i > 0$ and $\yy_i \in \rr^n$ such that the origin is a positive convex combination of $\{\yy_i \st i = 1,2,\ldots, m\}$. For example, 
    \eq{ 
        \frac{d}{dt} \begin{pmatrix} x \\ y \\z \end{pmatrix} 
        = \kk_1 x^{-1} y^{-2} \begin{pmatrix} 1 \\ 2 \\ 0 \end{pmatrix} 
        + \kk_2 y^{-3} z^{-1} \begin{pmatrix} 0 \\ 3 \\ 1 \end{pmatrix} 
        + \kk_3 x^{-2} y^3 z^2 \begin{pmatrix} 2 \\ -3 \\ -2 \end{pmatrix} 
        + \kk_4 xy^2 z \begin{pmatrix} -1 \\ -2 \\ -1 \end{pmatrix} 
        + \kk_5 x^4 y^{-2} z^{\frac{3}{2}} \begin{pmatrix} -4 \\ 2 \\ -3/2 \end{pmatrix} 
    }
    belongs to this class. At first sight of the differential equations, there may be very little reason to believe that this system has a unique positive steady state, which is globally stable, within the affine space parallel to $\Span\{ \yy_i \st i=1,2,\ldots, m\}$. However, with the tools developed in this paper, uniqueness of steady states and global stability immediately follow from \Cref{thm:stnmas-stable}. The reaction network that generates \eqref{eq:decay} under mass-action kinetics consists of the reactions $\yy_i \to \vv 0$ with rate constant $\kk_i > 0$. By definition, the unique target $\vv 0$ is in the relative interior of $\{ \yy_i \st i=1,2,\ldots, m\}$. Therefore by \Cref{thm:stnmas}, for any positive initial condition, there is exactly one positive steady state which is globally attracting. 
\end{ex}

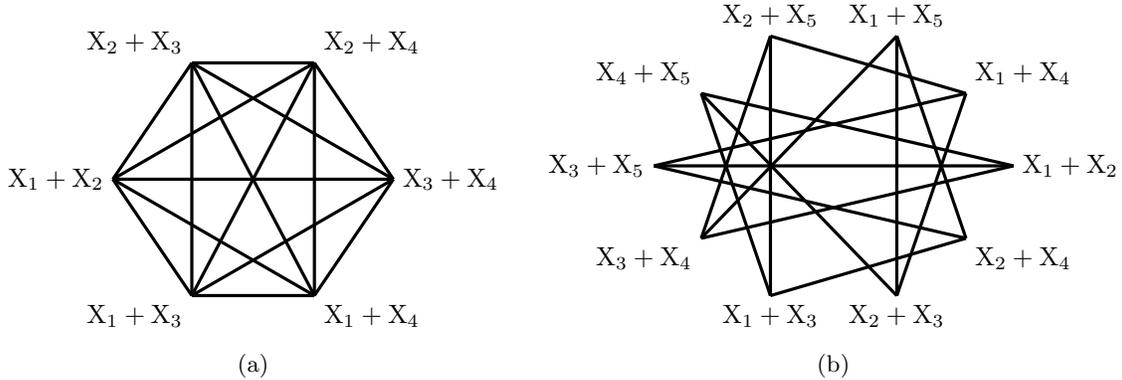
\begin{figure}[h!tbp]
\centering
\begin{subfigure}[b]{0.415\textwidth}
    \centering 
    \begin{tikzpicture}
    \begin{scope}[scale=2.1]
    \node (12) at (1.25,0) {$\cf{X_3+X_4}$}; 
    \node (13) at ({0.5*1.5},{0.866*1}) {$\cf{X_2+X_4}$}; 
    \node (14) at ({0.5*1.5},-{0.866*1}) {$\cf{X_1+X_4}$}; 
    \node (23) at ({-0.5*1.5},{0.866*1}) {$\cf{X_2+X_3}$}; 
    \node (24) at ({-1.25},0) {$\cf{X_1+X_2}$}; 
    \node (34) at ({-0.5*1.5},{-0.866*1}) {$\cf{X_1+X_3}$}; 
    
    \draw [very thick] (13.south west) -- (14.north west);
    \draw [very thick] (13.south west) -- (23.south east);
    \draw [very thick] (13.south west) -- (24.east);
    \draw [very thick] (13.south west) -- (34.north east);
    
    \draw [very thick] (14.north west) -- (23.south east);
    \draw [very thick] (14.north west) -- (24.east);
    \draw [very thick] (14.north west) -- (34.north east);
    
    \draw [very thick] (23.south east) -- (24.east);
    \draw [very thick] (23.south east) -- (34.north east);
    \draw [very thick] (24.east) -- (34.north east);
    
    \draw [very thick] (12.west) -- (13.south west);
    \draw [very thick] (12.west) -- (14.north west);
    \draw [very thick] (12.west) -- (23.south east);
    \draw [very thick] (12.west) -- (24.east);
    \draw [very thick] (12.west) -- (34.north east);
\end{scope}
    \end{tikzpicture}
    \caption{}
    \label{fig:dense4}
\end{subfigure}\hspace{0.25cm}
\begin{subfigure}[b]{0.475\textwidth}
    \centering 
    \begin{tikzpicture}
    \begin{scope}[scale=2.1]
    \node (12) at (1.5,0) {$\cf{X_1+X_2}$}; 
    \node (14) at (1.2, 0.5878) {$\cf{X_1+X_4}$}; 
    \node (15) at (0.4, 0.9511) {$\cf{X_1+X_5}$}; 
    \node (25) at (-0.4, 0.9511) {$\cf{X_2+X_5}$}; 
    \node (45) at (-1.2, 0.5878) {$\cf{X_4+X_5}$}; 
    \node (35) at (-1.5,0) {$\cf{X_3+X_5}$}; 
    \node (34) at (-1.2, -0.5878) {$\cf{X_3+X_4}$}; 
    \node (13) at (-0.4, -0.9511) {$\cf{X_1+X_3}$}; 
    \node (23) at (0.4, -0.9511) {$\cf{X_2+X_3}$}; 
    \node (24) at (1.2, -0.5878) {$\cf{X_2+X_4}$}; 
    
    \draw [very thick] (12.west) -- (34.north east);
    \draw [very thick] (12.west) -- (35.east);
    \draw [very thick] (12.west) -- (45.south east);
    
    \draw [very thick] (13.north) -- (24.north west);
    \draw [very thick] (13.north) -- (25.south);
    \draw [very thick] (13.north) -- (45.south east);
    
    \draw [very thick] (14.south west) -- (23.north);
    \draw [very thick] (14.south west) -- (25.south);
    \draw [very thick] (14.south west) -- (35.east);
    
    \draw [very thick] (15.south) -- (23.north);
    \draw [very thick] (15.south) -- (24.north west);
    \draw [very thick] (15.south) -- (34.north east);
    
    \draw [very thick] (23.north) -- (45.south east);
    
    \draw [very thick] (24.north west) -- (35.east);
    
    \draw [very thick] (25.south) -- (34.north east);
\end{scope}
    \end{tikzpicture}
    \caption{}
    \label{fig:dense5}
\end{subfigure}
    \caption{Reversible systems in (a) \Cref{ex:dense4} and (b) \Cref{ex:dense5} that are dynamically equivalent to detailed-balanced systems. Each undirected edge represents a pair of reversible edges.}
    \label{fig:dense}
\end{figure}

\begin{figure}[h!tbp]
\centering
    \begin{tikzpicture}[scale=2.1]
    \coordinate (a) at (-1,-0.75) {};
    \coordinate (b) at (0.74, -1.01) {};
    \coordinate (c) at (0.97,-0.57) {};
    \coordinate (d) at (0.245,0.92) {};
    \draw [gray!50] (a)--(b)--(d)--cycle;
    \draw [gray!50] (b)--(c)--(d)--cycle;
    \draw [gray!50, dashed] (a)--(c);
    \node[outer sep=0pt, inner sep=1pt, red!50!black]  (ad) at (-0.37,0.09) {{$\bullet$}};
    \node[outer sep=0pt, inner sep=1pt] (ab) at (-0.12,-0.88) {{$\bullet$}};
    \node[outer sep=0pt, inner sep=1pt] (ac) at (-0.01,-0.66) {{$\bullet$}};
    
    \node[outer sep=0pt, inner sep=1pt] (bc) at (0.855,-0.79) {{$\bullet$}};
    \node[outer sep=0pt, inner sep=1pt] (bd) at (0.495,-0.045) {{$\bullet$}};
    
    \node[outer sep=0pt, inner sep=1pt] (cd) at (0.6,0.18) {{$\bullet$}};
   \draw [fwdrxn, red!50!black] (ad) -- (ab);
   \draw [fwdrxn, red!50!black] (ad) -- (bd);
   \draw [fwdrxn, red!50!black, dashed] (ad) -- (ac);
   \draw [fwdrxn, red!50!black, dashed] (ad) -- (cd);
   \draw [fwdrxn, red!50!black, dashed, line width=0.6mm] (ad) -- (bc);
   \node [gray!50, above=0pt of d] {$2\cf{X}_1$};
   \node [gray!50, left=0pt of a] {$2\cf{X}_2$}; 
   \node [gray!50, below=3pt of b] {\,\,$2\cf{X}_3$}; 
   \node [gray!50, right=2pt of c] {$2\cf{X}_4$};

   \node [red!50!black, above=0pt of ad] {$\cf{X}_1 + \cf{X}_2${\hspace{0.75cm}}\,};
   \node [below right=-5pt of bd] {$\cf{X}_1 + \cf{X}_3$};
   \node [right=0pt of cd] {$\cf{X}_1 + \cf{X}_4$};
   \node [below=0pt of ab] {$\cf{X}_2 + \cf{X}_3$};
   \node [below right=-5pt of bc] {$\cf{X}_3 + \cf{X}_4$};

    \node [blue] (centre) at (0.2425, -0.35) {\Large{$\bullet$}};
\end{tikzpicture}
    \caption{Geometric argument for dynamically equivalence to single-target network in \Cref{ex:dense4}. Shown are the edges with $\mrm{X}_1 + \mrm{X}_2$ as their source. The centre $\left(\frac{1}{2}, \frac{1}{2}, \frac{1}{2}, \frac{1}{2} \right)^\top$ of the tetrahedron is marked in blue. With rate constants given in the example, the weighted sum of reaction vectors points from the source to the centre.
    }
    \label{fig:dense4-embedded}
\end{figure}
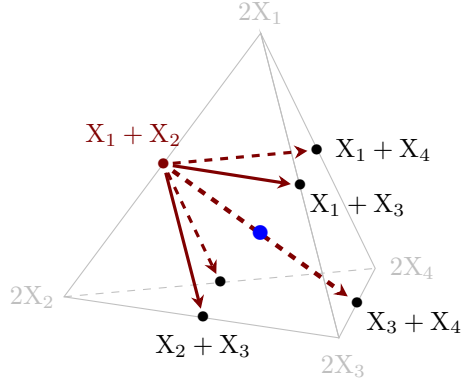

\begin{ex}
\label{ex:dense4}
    Consider the reaction network shown in \Cref{fig:dense4}, where each edge represents a reversible pair of reactions. Then in the language of the standard basis $\{ \hat{\mbf{e}}_i \st i = 1,\ldots 4\}$ of $\rr^4$, the six vertices are $\{ \yy_{ij} = \hat{\mbf{e}}_i + \hat{\mbf{e}}_j \st 1 \leq i < j \leq 4 \}$. Edges take the form $\yy_{ij} \RR \yy_{pq}$ where $(i,j) \neq (p,q)$, with the network $G$ being a complete graph.

    A given vertex $\yy_{ij}$ is source to two kinds of edges: an edge whose target has disjoint support from the source (i.e., $\yy_{ij} \to \yy_{pq}$ where $i$, $j$, $p$, $q$ are distinct integers), and those whose targets share an index with the source (i.e., $\yy_{ij} \to \yy_{iq}$ or $\yy_{ij} \to \yy_{pj}$). The latter represents a chemical reaction with the common species acting as a catalyst.  

    Rate constant of an edge is assigned based on the source vertex and which type of edge it is. For example, consider the source vertex $\yy_{12} = \hat{\mbf{e}}_1 + \hat{\mbf{e}}_2$. The edge $\yy_{12} \to \yy_{34}$ is assigned an arbitrary rate constant $\kk_{12a} > 0$, while the catalytic reactions 
    \eq{ 
        \yy_{12} \to \yy_{13}, \quad 
        \yy_{12} \to \yy_{14}, \quad 
        \yy_{12} \to \yy_{23}, \quad 
        \yy_{12} \to \yy_{24}
    }
    are assigned rate constants $\kk_{12b} > 0$. The rate constant for $\yy_{13} \to \yy_{24}$ is $\kk_{13a} > 0$, while other edges originating from $\yy_{13}$ have rate constants $\kk_{13b} > 0$. The remaining edges are assigned rate constants in a similar manner. 

    The mass-action system $\Gk$ is not detailed-balanced in general since the circuit condition \eqref{eq:circuit} is generically violated along some cycles, e.g., the cycle with vertices $\yy_{12}$, $\yy_{24}$ and $\yy_{34}$ is associated to the condition
    \eq{ 
        \kk_{12b}\kk_{24a}\kk_{34a} = \kk_{12a}\kk_{34b}\kk_{24b}. 
    }
    Moreover, this reversible network has deficiency $\delta = 2$; therefore, the mass-action system is not complex-balanced in general as well. 

    Nonetheless, the system can be realized by a single-target network and is dynamically equivalent to a detailed-balanced system. The weighted sum of reaction vectors coming out of the vertex $\yy_{12}$ is
    \eq{ 
        &\kk_{12a} \left( \yy_{34} - \yy_{12} \right) 
        + \kk_{12b} \left( \yy_{13} - \yy_{12} \right) 
        + \kk_{12b} \left( \yy_{14} - \yy_{12} \right) 
        + \kk_{12b} \left( \yy_{23} - \yy_{12} \right) 
        + \kk_{12b} \left( \yy_{24} - \yy_{12} \right) 
        \\={}&  \kk_{12a} \begin{pmatrix*}[r]
            -1 \\ -1 \\1 \\ 1 
        \end{pmatrix*}  + \kk_{12b} \begin{pmatrix*}[r]
            -2 \\ -2 \\2 \\ 2 
        \end{pmatrix*}  
        = 2\left(\kk_{12a} + 2\kk_{12b} \right)  \begin{pmatrix}
            1/2-1 \\ 1/2-1 \\1/2\hphantom{{}-1} \\ 1/2\hphantom{{}-1}
        \end{pmatrix}  ,
    }
    which is also the weighted reaction vector of $\yy_{12} \to \left( \frac{1}{2}, \frac{1}{2}, \frac{1}{2}, \frac{1}{2}\right)^\top$ with rate constant $2\left(\kk_{12a} + 2\kk_{12b} \right)$. See \Cref{fig:dense4-embedded} for the geometry of this calculation. By symmetry, the weighted sum of reaction vectors out of any vertex of $G$ can be written as an edge to $\left( \frac{1}{2}, \frac{1}{2}, \frac{1}{2}, \frac{1}{2}\right)^\top$. Therefore, the mass-action system generated by the network in \Cref{fig:dense4} is dynamically equivalent to a single-target network with target vertex $\left( \frac{1}{2}, \frac{1}{2}, \frac{1}{2}, \frac{1}{2}\right)^\top$, which is in the relative interior of the Newton polytope. By \Cref{thm:stnmas}, the mass-action system is dynamically equivalent to a globally stable detailed-balanced system.
\end{ex}

\begin{ex}
\label{ex:dense5}
    Consider the reversible reaction network in \Cref{fig:dense5} in $\rr^5$. This network is similar to that of \Cref{ex:dense4} except it has \emph{no} catalytic reaction. The ten vertices are $\cf{X}_i + \cf{X}_j$ with $1 \leq i < j \leq 5$. Edges take the form $\cf{X}_i + \cf{X}_j \RR \cf{X}_p + \cf{X}_q$ where $i$, $j$, $p$, $q$ are all distinct. Further assume the rate constants depend only on the source vertices, i.e., all reactions originating from the vertex $\cf{X}_i + \cf{X}_j$ have the same rate constant. 
    
    This reversible network has deficiency $\delta = 5$. This system is in general neither complex-balanced nor detailed-balanced. For example, the Wegscheider's condition involves the equation 
        \eq{ 
            \kk_{13}\kk_{24}\kk_{35}=\kk_{12}\kk_{34}
        }
    among many others. Nonetheless, the system can be realized by a single-target network, whose target vertex $\frac{2}{5}\cf{X_1} + \frac{2}{5}\cf{X_2} + \frac{2}{5}\cf{X_3} + \frac{2}{5}\cf{X_4} + \frac{2}{5}\cf{X_5}$ lies in the relative interior of the Newton polytope. Therefore, the mass-action system is dynamically equivalent to a globally stable detailed-balanced system. 
\end{ex}

\section{Networks with two targets}
\label{sec:multtarg}

In the previous section, we have characterized the dynamics of all single-target networks under mass-action kinetics. In particular, we have seen that if the target vertex is in the relative interior of the Newton polytope, then any mass-action system generated by that network is dynamically equivalent to a detailed-balanced system, which has a globally attracting positive steady state within each stoichiometric compatibility class.

One may wonder if a similar result holds for networks with multiple targets, each in the relative interior of the Newton polytope. Networks with such \lq\lq inward pointing\rq\rq\ reaction vectors, or \emph{endotatic networks}~\cite{CraciunNazarovPantea2013}, are conjectured to be \emph{persistent}, i.e., trajectories are bounded away from the boundary, and even \emph{permanent}, i.e., admit a globally attracting compact set within each stoichiometric compatibility class. These conjectures have been proved for certain classes of networks: weakly reversible networks with one connected component~\cites{Anderson2011_GAC, BorosHofbauer2019}, strongly endotactic networks~\cite{GopalkrishnanMillerShiu2014}, and two-dimensional networks~\cite{CraciunNazarovPantea2013,Pantea2012}.

Even with just \emph{two} target vertices, there exist strongly endotactic networks with multiple positive steady states (within the same stoichiometric compatibility class), and thus cannot be globally stable. In the following examples, we relax the requirement, instead searching for a dynamically equivalent \emph{complex-balanced} system. Because vertices that do not appear explicitly as monomials in the differential equations are not necessary when searching for a dynamically equivalent complex-balanced system~\cite{CraciunJinYu2019}, we restrict our attention to subnetworks on the complete graph defined by the source vertices.

\begin{ex}
\label{ex:2targets1D}
For example, consider the mass-action system 
    \tikzc{
    \draw [step=1.5, gray, very thin] (0,-0.5) grid (8.5,0.5);
        \node (1) at (0,0) {$\bullet$};
        \node (2) at (3,0) {$\bullet$};
        \node (3) at (4.5,0) {$\bullet$};
        \node (4) at (7.5,0) {$\bullet$};
        \node (a) at (1.5, 0) {\blue{$\bullet$}};
        \node (b) at (6, 0) {\blue{$\bullet$}};
            \node at (0,0) [left] {$\yy_1$\,};
            \node at (3,0) [right] {\,$\yy_2$};
            \node at (4.5,0) [left] {$\yy_3$\,};
            \node at (7.5,0) [right] {\,$\yy_4$};
        \draw [fwdrxn, transform canvas={yshift=0pt}] (1) -- (a) node [midway, above] {$\kk_1$};
        \draw [fwdrxn, transform canvas={yshift=0pt}] (2) -- (a) node [midway, above] {$\kk_2$};
        \draw [fwdrxn, transform canvas={yshift=0pt}] (3) -- (b) node [midway, above] {$\kk_3$};
        \draw [fwdrxn, transform canvas={yshift=0pt}] (4) -- (b) node [midway, above] {$\kk_4$};
    }
where the source vertices are $\yy_1 = 0$, $\yy_2 = 2$, $\yy_3 = 3$ and $\yy_4 = 5$, and the target vertices are $\yy_5 = 1$ and $\yy_6 = 4$. The associated dynamical system 
    \eq{ 
        \frac{dx}{dt} &= \kk_1 - \kk_2 x^2 + \kk_3 x^3 - \kk_4 x^5
    }
has \emph{multiple} positive steady states for some choice of $\kk_i > 0$ by Descartes' rule of signs. In particular, for these choices of $\kk_i > 0$, it cannot be dynamically equivalent to a detailed-balanced (or complex-balanced) system, which necessarily has a \emph{unique} positive steady state.

We claim that this system $\Gk$ is dynamically equivalent to a complex-balanced system if and only if $\kk_1 \kk_4 \geq \kk_2 \kk_3$. Let $G'$ be the complete graph on the source vertices and let $\kk'_{ij} \geq 0$ be the label on $\yy_i \to \yy_j$. The objective is a subgraph of $G'$.

Since there are four sources in $G$, which are also sources in $G'$, there are four non-trivial linear relations on the edge labels of $G$ and those of $G'$, and two trivial equations ($0 = 0$) coming from the vertices $\yy_5$ and $\yy_6$. For example, the dynamical equivalence relation at $\yy_2$ is 
    \eq{ 
        - \kk_2 = -2 \kk_{21} + \kk_{23} + 3 \kk_{24}.
    }
As noted after \Cref{def:flux}, the dynamical equivalence relation can be transformed to that of the fluxes, by multiplying both sides of the linear equation by the source vertex's monomial. Fix a state $x > 0$, the value yet to be determined. Let $J_i = \kk_i x^{\yy_i}$ be the flux across the edge originating from $\vv y_i$. Similarly, on $G'$ let $Q_{ij} = \kk'_{ij} x^{\yy_i}$ be the flux across the edge $\yy_i \to \yy_j$. Then the dynamical equivalence relation at $\yy_2$ can be written as 
    \eq{ 
        - J_2 = -2 Q_{21} + Q_{23} + 3 Q_{24}. 
    }

The switch from rate constants to fluxes is convenient when considering dynamical equivalence and complex-balancing simultaneously. For example, in the objective system $G'_{\vv\kk'}$, the state $x$ is complex-balanced if and only if 
    \eq{ 
        (\kk'_{21} + \kk'_{23} + \kk'_{24}) x^{\yy_2} = \kk'_{12}x^{\yy_1} + \kk'_{32}x^{\yy_3} + \kk'_{42}x^{\yy_4}.
    }
In the language of flux, this reads $ Q_{21} + Q_{23} + Q_{24} = Q_{12} + Q_{32} + Q_{42}$.

Hence, the four dynamical equivalence relations, in terms of fluxes, are 
    \eq{ 
        &\hphantom{{}-{}}J_1 = 2 Q_{12} + 3 Q_{13} + 5 Q_{14}, \\
        &-J_2 = -2 Q_{21} + Q_{23} + 3 Q_{24}, \\
        &\hphantom{{}-{}}J_3 = -3 Q_{31} - Q_{32} + 2Q_{34} , \\
        &-J_4 = -5Q_{41} - 3Q_{42} - 2 Q_{43},
    }
while the complex-balanced conditions on $G'$ are
    \eqn{ 
        Q_{12} + Q_{13} + Q_{14} &= Q_{21} + Q_{31} + Q_{41}, \label{eq:exCB}\\
        Q_{21} + Q_{23} + Q_{24} &= Q_{12} + Q_{32} + Q_{42}, \nonumber \\
        Q_{31} + Q_{32} + Q_{34} &= Q_{13} + Q_{23} + Q_{43}, \nonumber \\
        Q_{41} + Q_{42} + Q_{43} &= Q_{14} + Q_{24} + Q_{34}. \nonumber
    }
    
It is not difficult to see from \eqref{eq:exCB} that the left-hand side of the complex-balanced condition at $\yy_1$ can be rewritten as
    \eq{ 
    Q_{12} + Q_{13} + Q_{14} = 
        \left( \frac{1}{2} J_1 - \frac{3}{2} Q_{13} - \frac{5}{2} Q_{14} \right) + Q_{13} + Q_{14} \leq \frac{1}{2} J_1,
    }
while the right-hand side is 
    \eq{ 
    Q_{21} + Q_{31} + Q_{41} = 
        \left( \frac{1}{2}J_2 + \frac{1}{2} Q_{23} + \frac{3}{2} Q_{24} \right) + Q_{31} + Q_{41}
        \geq \frac{1}{2} J_2. 
    }
In other words, $J_1 \geq J_2$. Similarly, from the last complex-balanced condition, we obtain $J_4 \geq J_3$. Therefore,
$\Gk$ being dynamically equivalent to a complex-balanced system implies $J_1J_4 \geq J_2 J_3$; equivalently, $\kk_1\kk_4 \geq \kk_2 \kk_3$. 

Conversely, suppose $\kk_1\kk_4 \geq \kk_2 \kk_3$ or $J_1J_4 \geq J_2J_3$. Clearly the system of differential equations admits at least one positive steady state $x>0$. At $x$, the fluxes are balanced, i.e., $J_1 + J_3 = J_2 + J_4$. Substituting the inequality into $\kk_1 + \kk_3 x^3 = \kk_2x^2+\kk_4x^5$, we see that $J_1 \geq J_2$ and $J_4 \geq J_3$. The choice  
    \eq{ 
        Q_{12} = Q_{21} = \frac{J_2}{2}, \qquad 
        Q_{34} = Q_{43} = \frac{J_3}{2}, \qquad 
        Q_{14} = \frac{J_1-J_2}{5} \qquad \text{and} \qquad 
        Q_{41} = \frac{J_4-J_3}{5}
    } 
satisfies the dynamical equivalence and complex-balanced conditions. Choose rate constants on the new network to be 
    \eq{ 
        \tilde{\kk}_{12} = \frac{\kk_2}{2}, \qquad 
        \tilde{\kk}_{34} = \frac{\kk_3 x^3}{2}, \qquad 
        \tilde{\kk}_{12} = \frac{\kk_2x^2}{2}, \qquad  
        \tilde{\kk}_{43} = \frac{\kk_3x^3}{2}, \qquad \\
        \tilde{\kk}_{14} = \frac{1}{5} \left( \kk_1 - \frac{\kk_2x^2}{2} \right)\qquad \text{and} \qquad 
        \tilde{\kk}_{41} = \frac{1}{5} \left( \kk_4 - \frac{\kk_3x^{-2}}{2} \right).
        \hspace{0.8cm}
    }
Note that $\tilde{\kk}_{14} = \frac{1}{10}(2J_1 - J_2) > 0$, and $\tilde{\kk}_{41} = \frac{1}{10x^5}(2J_4 - J_3) > 0$. With this choice of rate constants $\vv{\tilde{\kk}}$, the mass-action system is dynamically equivalent to a complex-balanced system with steady state $x$, which is the unique positive steady state of the system.
\end{ex}

\begin{ex}\label{ex:2targets2D}
Consider the network under mass-action kinetics in \Cref{fig:2targets2Da} with four source vertices and two target vertices in $\rr^2$. The source vertices correspond to the monomials $1$, $x^3$, $x^3y^2$ and $y^2$. We will show that this system is dynamically equivalent to a complex-balanced system if and only if 
    \eq{ 
        \frac{1}{25} \leq \frac{\kk_1\kk_3}{\kk_2\kk_4} \leq 25. 
    }
Being dynamically equivalent to a complex-balanced system, if it can be done at all, can be achieved using only the source vertices~\cite{CraciunJinYu2019}; thus we look for a subnetwork of the \emph{complete graph} shown in \Cref{fig:2targets2Db}.

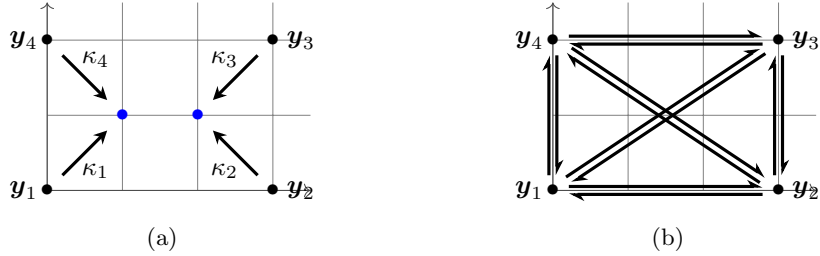
\begin{figure}[h!tbp]
\centering
\begin{subfigure}[b]{0.4\textwidth}
    \centering 
    \begin{tikzpicture}
        \draw [step=1, gray, very thin] (0,0) grid (3.5,2.5);
    \draw [ ->, black!70!white] (0,0)--(3.5,0);
    \draw [ ->, black!70!white] (0,0)--(0,2.5);
    
    \node (c1) at (1,1) {\blue{$\bullet$}};
    \node (c2) at (2,1) {\blue{$\bullet$}};
    
    \node (1) at (0,0) {$\bullet$}; 
    \node (2) at (3,0) {$\bullet$}; 
    \node (3) at (3,2) {$\bullet$}; 
    \node (4) at (0,2) {$\bullet$}; 
    
    \node at (0,0) [left] {$\yy_1$};
    \node at (0,2) [left] {$\yy_4$};
    \node at (3,0) [right] {\,$\yy_2$};
    \node at (3,2) [right] {\,$\yy_3$};
    
    \draw [fwdrxn, transform canvas={yshift=0pt}] (1) -- (c1) node [midway, below right] {\!\!\!$\kk_1$};
    \draw [fwdrxn, transform canvas={yshift=0pt}] (4) -- (c1) node [midway, above right] {\!\!\!$\kk_4$};
    \draw [fwdrxn, transform canvas={yshift=0pt}] (2) -- (c2) node [midway, below left] {$\kk_2$\!\!\!};
    \draw [fwdrxn, transform canvas={yshift=0pt}] (3) -- (c2) node [midway, above left] {$\kk_3$\!\!\!};
    \end{tikzpicture}
    \caption{}
    \label{fig:2targets2Da}
\end{subfigure}
\begin{subfigure}[b]{0.4\textwidth}
    \centering 
    \begin{tikzpicture}
        \draw [step=1, gray, very thin] (0,0) grid (3.5,2.5);
    \draw [ ->, black!70!white] (0,0)--(3.5,0);
    \draw [ ->, black!70!white] (0,0)--(0,2.5);

    \node (1) at (0,0) {$\bullet$}; 
    \node (2) at (3,0) {$\bullet$}; 
    \node (3) at (3,2) {$\bullet$}; 
    \node (4) at (0,2) {$\bullet$}; 
    
    \node at (0,0) [left] {$\yy_1$};
    \node at (0,2) [left] {$\yy_4$};
    \node at (3,0) [right] {\,$\yy_2$};
    \node at (3,2) [right] {\,$\yy_3$};
    
    \draw [revrxn, transform canvas={yshift=1.5pt}] (1) -- (2);
    \draw [revrxn, transform canvas={yshift=-1.5pt}] (2) -- (1);
    
    \draw [revrxn, transform canvas={yshift=1.5pt}] (4) -- (3);
    \draw [revrxn, transform canvas={yshift=-1.5pt}] (3) -- (4);

    \draw [revrxn, transform canvas={xshift=1.5pt}] (4) -- (1);
    \draw [revrxn, transform canvas={xshift=-1.5pt}] (1) -- (4);
    
    \draw [revrxn, transform canvas={xshift=1.5pt}] (3) -- (2);
    \draw [revrxn, transform canvas={xshift=-1.5pt}] (2) -- (3);
    
    \draw [revrxn, transform canvas={xshift=-1.06pt, yshift=1.06pt}] (1) -- (3);
    \draw [revrxn, transform canvas={xshift=1.06pt, yshift=-1.06pt}] (3) -- (1);
    
    \draw [revrxn, transform canvas={xshift=1.06pt, yshift=1.06pt}] (4) -- (2);
    \draw [revrxn, transform canvas={xshift=-1.06pt, yshift=-1.06pt}] (2) -- (4);
    \end{tikzpicture}
    \caption{}
    \label{fig:2targets2Db}
\end{subfigure}
    \caption{(a) The mass-action system with two target vertices from \Cref{ex:2targets2D}, which is dynamically equivalent to a complex-balanced system using a subnetwork of (b) if and only if $\frac{1}{25} \leq \frac{\kk_1\kk_3}{\kk_2\kk_4} \leq 25$. }
    \label{fig:2targets2D}
\end{figure}

Suppose $\vv x > \vv 0$ is a steady state for which the system in \Cref{fig:2targets2Da} is dynamically equivalent to a complex-balanced system. Note that $\vv x$ is a positive steady state for the system. Let $J_i = \kk_i \xx^{\yy_i} > 0$ define a flux on the network. The steady state flux $\vv J$ thus satisfies $J_1 = J_3$ and $J_2 = J_4$. Let $\vv Q$, where $Q_{ij} \geq 0$ is to be determined, denote the flux across the edge $\yy_i \to \yy_j$ in \Cref{fig:2targets2Db}. Dynamical equivalence is obtained if and only if 
    \eq{ 
        J_1 \begin{pmatrix} 1 \\ 1 \end{pmatrix} = 
        \begin{pmatrix} 
            3 Q_{12} + 3 Q_{13} \\
            2 Q_{14} + 2 Q_{13}
        \end{pmatrix}, \quad \!
        \qquad 
        & J_3 \begin{pmatrix} -1 \\ -1 \end{pmatrix} = 
        \begin{pmatrix} 
            -3 Q_{31} - 3 Q_{34} \\
            -2 Q_{31} - 2 Q_{32}
        \end{pmatrix},
        \\
        J_2 \begin{pmatrix} -1 \\ 1 \end{pmatrix} = 
        \begin{pmatrix} 
            -3 Q_{21} - 3 Q_{24} \\
            2 Q_{23} + 2 Q_{24}
        \end{pmatrix}, 
        \qquad 
        & J_4 \begin{pmatrix} 1 \\ -1 \end{pmatrix} = 
        \begin{pmatrix} 
            3 Q_{42} + 3 Q_{43} \\
            -2 Q_{42} - 2 Q_{41}
        \end{pmatrix}.
    }
Meanwhile, complex-balancing is obtained on a subnetwork of that in \Cref{fig:2targets2Db} if and only if  
    \eqn{ 
        Q_{12} + Q_{13} + Q_{14} &= Q_{21} + Q_{31} + Q_{41}, \label{eq:ex:CB2} \\
        Q_{21} + Q_{23} + Q_{24} &= Q_{12} + Q_{32} + Q_{42}, \label{eq:ex:CB3} \\
        Q_{31} + Q_{32} + Q_{34} &= Q_{13} + Q_{23} + Q_{43}, \nonumber \\
        Q_{41} + Q_{42} + Q_{43} &= Q_{14} + Q_{24} + Q_{34}. \nonumber
    }
    
Consider \eqref{eq:ex:CB2}. The left-hand side satisfies the inequality
    \eq{ 
        Q_{12} + Q_{13} + Q_{14} &= \left( Q_{12} + Q_{13} \right) + \left( Q_{14} + Q_{13} \right) - Q_{13}
        = \frac{1}{3} J_1 + \frac{1}{2} J_1 - Q_{13} \leq \frac{5}{6} J_1.
    } 
By substituting the dynamical equivalence equation $J_4 = 2Q_{42} + 2Q_{41}$, we see that the right-hand side is 
    \eq{ 
        Q_{21} + Q_{31} + Q_{41} 
        \geq Q_{41} =  \frac{1}{2} J_4 - Q_{42} 
        = \frac{1}{6} J_4 + \frac{1}{3} J_4 - Q_{42} 
        = \frac{1}{6} J_4 + Q_{43}
        \geq \frac{1}{6} J_2.
    }
Hence for the system in \Cref{fig:2targets2Da} to be dynamically equivalent to a complex-balanced one, we have $J_2 \leq 5 J_1$. Similarly, from \eqref{eq:ex:CB3}, it can be shown that $J_1 \leq 5 J_2$. Therefore, complex-balancing on a subnetwork of  \Cref{fig:2targets2Db} implies $\frac{1}{5} \leq \frac{J_1}{J_2} \leq 5$. Since $J_1 = J_3$ and $J_2 = J_4$ at steady state, i.e., $\kk_1 = \kk_3 x^3 y^2$ and $\kk_2 x^3 = \kk_4 y^2$ by definition, so
    \eqn{ \label{eq:2target-ineq}
        \frac{1}{25} \leq  \frac{J_1J_3}{J_2J_4} = \frac{(\kk_1)(\kk_3x^3y^2)}{(\kk_2x^3)(\kk_4y^2)} \leq 25. 
    }
It follows that 
    \eq{ 
        \frac{1}{25} \leq \frac{\kk_1\kk_3}{\kk_2\kk_4} \leq 25
    }
is a necessary condition for dynamical equivalence to complex-balancing.

Next we show that the inequality $\frac{1}{25} \leq \frac{\kk_1\kk_3}{\kk_2\kk_4} \leq 25$ is sufficient for the system in \Cref{fig:2targets2Da} to be dynamically equivalent to a complex-balanced system on a subnetwork of \Cref{fig:2targets2Db}. We first deduce from \eqref{eq:2target-ineq} that a positive steady state exists, i.e., that there exist $x^3$ and $y^2 > 0$ such that $\kk_1 = \kk_3 x^3 y^2$ and $\kk_2 x^3 = \kk_4 y^2$. It is not difficult to see that 
        \eq{ 
        x^3 =  \sqrt{\frac{\kk_1\kk_4}{\kk_2\kk_3}}
            \quad \text{and} \quad 
        y^2 = \sqrt{\frac{\kk_1\kk_2}{\kk_3\kk_4}}
        }
is a solution. Defining the fluxes to be $J_1 = \kk_1$, $J_3 = \kk_3 x^3 y^2$, $J_2 = \kk_2 x^3$ and $J_4 = \kk_4 y^2$, we obtain the inequality $\frac{1}{5} \leq \frac{J_1}{J_2} \leq 5$. Moreover, $J_1 = J_3$ and $J_2 = J_4$.

When $J_1 = 5J_2$, a solution $\vv Q \geq \vv 0$ to the dynamical equivalence and complex-balanced conditions above is 
    \eq{ 
        Q_{14} = Q_{32} = \frac{J_1}{6} , \qquad & Q_{13} = Q_{31} = \frac{J_1}{3} , \\
        Q_{41} = Q_{23} = \frac{J_2}{2} , \qquad & Q_{21} = Q_{43} = \frac{J_2}{3} , 
    }
and the remaining $Q_{ij} = 0$. 
When $5J_1 = J_2$, a solution $\vv Q \geq \vv 0$ to the dynamical equivalence and complex-balanced conditions above is 
    \eq{ 
        Q_{41} = Q_{23} = \frac{J_2}{6}, \qquad & Q_{24}=Q_{42} = \frac{J_2}{3}, \\
        Q_{14} = Q_{32} = \frac{J_1}{2}, \qquad & Q_{12} = Q_{34} = \frac{J_1}{3},
    }
and the remaining $Q_{ij} = 0$. Whenever $\frac{1}{5} < \frac{J_1}{J_2} < 5$, the system is a convex combination of the two extremal cases, with a solution given by the appropriate convex combination of the two systems in \Cref{fig:2targets2D-bdycase}. Therefore, when $\frac{1}{5} \leq \frac{J_1}{J_2} \leq 5$, there exists $\vv Q \geq \vv 0$ satisfying the dynamical equivalence and complex-balanced conditions.

The vector $\vv Q$, which depends on $J_1$, $J_2$ and hence a function of $\vv x$, can be used to generate rate constants for the new network. We first illustrate the process using the case $J_1 = 5J_2$, i.e., when $\kk_2\kk_4 = 25 \kk_1\kk_3$, before we comment on the general case. Let $\kk'_{ij} \geq 0$, the value to be determined, denote the rate constant of the reaction $\yy_i \to \yy_j$. Consider $Q_{21} = \frac{J_2}{3}$ and $Q_{23} = \frac{J_2}{6}$. By definition $J_2 = \kk_2 x^3$ and $Q_{2j} = \kk'_{2j} x^3$. Looking at the equation
    \eq{ 
        \kk'_{21} x^3 = Q_{21} = \frac{J_2}{3} = \frac{\kk_2 x^3}{3} ,
    }
it is clear we should choose $\kk'_{21} = \frac{\kk_2}{3}$. A similar argument forces $\kk'_{23} = \frac{\kk_2}{2}$. Noting that $J_3 = J_1$ and $J_4 = J_2$, we conclude $\kk'_{32} = \frac{\kk_3}{6}$, $\kk'_{31} = \frac{\kk_3}{3}$, $\kk'_{41} = \frac{\kk_4}{2}$, and $\kk'_{43} = \frac{\kk_4}{3}$. The mass-action system is shown in \Cref{fig:2targets2Dc}. 

In the general case $\frac{1}{25} \leq \frac{\kk_1\kk_3}{\kk_2\kk_4} \leq 25$, as noted earlier a solution $\vv Q \geq \vv 0$ exists satisfying the dynamical equivalence and complex-balancing conditions. Such $\vv Q$ is a convex combination of the extremal cases; as a result, for any $j$, it is always the case that $Q_{1j}$ and $Q_{3j}$ are fractions of $J_1 = J_3$, and $Q_{2j}$ and $Q_{4j}$ are fractions of $J_2 = J_4$. Since $Q_{ij}$ and $J_i$ are both scalar multiples of $\xx^{\yy_i}$, the monomial gets cancelled from the equation and one can solve for $\kk'_{ij}$ as a fraction of $\kk_{ij}$. 
\end{ex}

\begin{figure}[h!tbp]
\centering
\begin{subfigure}[b]{0.4\textwidth}
    \centering 
    \begin{tikzpicture}
        \draw [step=1, gray, very thin] (0,0) grid (3.5,2.5);
    \draw [ ->, black!70!white] (0,0)--(3.5,0);
    \draw [ ->, black!70!white] (0,0)--(0,2.5);

    \node (1) at (0,0) {$\bullet$}; 
    \node (2) at (3,0) {$\bullet$}; 
    \node (3) at (3,2) {$\bullet$}; 
    \node (4) at (0,2) {$\bullet$}; 
    
    \node at (0,0) [left] {$\yy_1$};
    \node at (0,2) [left] {$\yy_4$};
    \node at (3,0) [right] {\,$\yy_2$};
    \node at (3,2) [right] {\,$\yy_3$};
    
    \draw [fwdrxn] (2) -- (1)  node [midway, below] {\footnotesize $\frac{\kk_2}{3}$};
    
    \draw [fwdrxn] (4) -- (3) node [midway, above] {\footnotesize $\frac{\kk_4}{3}$};

    \draw [revrxn, transform canvas={xshift=1.5pt}] (4) -- (1) node [midway, right] {\footnotesize $\frac{\kk_4}{2}$};
    \draw [revrxn, transform canvas={xshift=-1.5pt}] (1) -- (4) node [midway, left] {\footnotesize $\frac{\kk_1}{6}$};
    
    \draw [revrxn, transform canvas={xshift=1.5pt}] (3) -- (2) node [midway, right] {\footnotesize $\frac{\kk_3}{6}$};
    \draw [revrxn, transform canvas={xshift=-1.5pt}] (2) -- (3) node [midway, left] {\footnotesize $\frac{\kk_2}{2}$};
    
    \draw [revrxn, transform canvas={xshift=-1.06pt, yshift=1.06pt}] (1) -- (3) node [midway, above left] {\footnotesize $\frac{\kk_1}{3}$\!\!\!};
    \draw [revrxn, transform canvas={xshift=1.06pt, yshift=-1.06pt}] (3) -- (1)node [midway, below right] {\footnotesize \!\!\!$\frac{\kk_3}{3}$};
    
    \end{tikzpicture}
    \caption{}
    \label{fig:2targets2Dc}
\end{subfigure}
\begin{subfigure}[b]{0.4\textwidth}
    \centering 
    \begin{tikzpicture}
        \draw [step=1, gray, very thin] (0,0) grid (3.5,2.5);
    \draw [ ->, black!70!white] (0,0)--(3.5,0);
    \draw [ ->, black!70!white] (0,0)--(0,2.5);

    \node (1) at (0,0) {$\bullet$}; 
    \node (2) at (3,0) {$\bullet$}; 
    \node (3) at (3,2) {$\bullet$}; 
    \node (4) at (0,2) {$\bullet$}; 
    
    \node at (0,0) [left] {$\yy_1$};
    \node at (0,2) [left] {$\yy_4$};
    \node at (3,0) [right] {\,$\yy_2$};
    \node at (3,2) [right] {\,$\yy_3$};

    \draw [fwdrxn] (1) -- (2)  node [midway, below] {\footnotesize $\frac{\kk_1}{3}$};
    
    \draw [fwdrxn] (3) -- (4) node [midway, above] {\footnotesize $\frac{\kk_3}{3}$};

    \draw [revrxn, transform canvas={xshift=1.5pt}] (4) -- (1) node [midway, right] {\footnotesize $\frac{\kk_4}{6}$};
    \draw [revrxn, transform canvas={xshift=-1.5pt}] (1) -- (4) node [midway, left] {\footnotesize $\frac{\kk_1}{2}$};
    
    \draw [revrxn, transform canvas={xshift=1.5pt}] (3) -- (2) node [midway, right] {\footnotesize $\frac{\kk_3}{2}$};
    \draw [revrxn, transform canvas={xshift=-1.5pt}] (2) -- (3) node [midway, left] {\footnotesize $\frac{\kk_2}{6}$};

    \draw [revrxn, transform canvas={xshift=1.06pt, yshift=1.06pt}] (4) -- (2) node [midway, above right] {\footnotesize \!\!\!$\frac{\kk_4}{3}$};
    \draw [revrxn, transform canvas={xshift=-1.06pt, yshift=-1.06pt}] (2) -- (4) node [midway, below left] {\footnotesize $\frac{\kk_2}{3}$\!\!\!};
    \end{tikzpicture}
    \caption{}
    \label{fig:2targets2Dd}
\end{subfigure}
    \caption{The system in \Cref{fig:2targets2Da} is dynamically equivalent to a complex-balanced system if and only if $\frac{1}{25} \leq \frac{\kk_1\kk_3}{\kk_2\kk_4} \leq 25$. The system is equivalent to (a) when $\kk_2\kk_4 = 25 \kk_1\kk_3$ and (b) when $25\kk_2\kk_4 = \kk_1\kk_3$. For $\frac{1}{25} \leq \frac{\kk_1\kk_3}{\kk_2\kk_4} \leq 25$, the dynamically equivalent system is an appropriate convex combination of (a) and (b).}
    \label{fig:2targets2D-bdycase}
\end{figure}

\begin{rmk}
    \Cref{ex:2targets2D} has $(1,1)^\top$ and $(2,1)^\top$ as target vertices, with a distance of $d = 1$ between them. If we consider the two-target network with targets $(a_i,1)$ distance $d > 0$ apart and has midpoint $(1.5,1)^\top$, a similar analysis gives a necessary and sufficient condition for dynamical equivalence to complex-balancing:
    \eq{ 
        \left( \frac{6-d}{d} \right)^2 \geq \frac{\kk_1\kk_3}{\kk_2\kk_4} \geq \left( \frac{d}{6-d}\right)^2. 
    }
    As $d \to 0$, we recover a stable single-target system. As $d \to 3$, the condition becomes $\kk_2\kk_4 = \kk_1\kk_3$, which is necessary and sufficient for the system $\yy_1 \RR \yy_4$, $\yy_2 \RR \yy_3$ to be detailed-balanced~\cite{Feinberg1989}. 
\end{rmk}

Much work has been done to derive algebraic conditions on the rate constants that are necessary and sufficient for complex-balancing~\cites{CraciunDickensteinSturmfelsShiu2009, DickensteinPerezmillan2011, Feinberg1989, disguised_toric}. In \Cref{ex:2targets1D,ex:2targets2D} above, we are able to derive semi-algebraic conditions on the rate constants that are necessary and sufficient for dynamical equivalence to complex-balancing --- at least for networks with special structure. To learn about stability properties of a mass-action system, dynamical equivalence to complex-balancing is extremely informative~\cite{CraciunYu2018}. 

In an upcoming paper, we will prove that for a large class of networks, dynamical equivalence to complex-balancing is an \emph{open condition}, i.e., there exists some open set in parameter space such that rate constants chosen from this set give rise to  mass-action systems that are dynamically equivalent to complex-balanced systems~\cite{CraciunJinYu_OpenCondition}. There is still much more to be done in order to obtain explicit semi-algebraic conditions on rate constants for dynamical equivalence to complex-balancing for general networks. In particular, numerical methods might allow us to  \emph{estimate}  the region in parameter space that gives rise to dynamical equivalence to a complex-balanced system. 


\section{Conclusions}

In this paper we introduced \emph{single-target networks}, and classified the mass-action systems generated by them as either \emph{(i) globally stable} (and actually dynamically equivalent to detailed balanced systems with a single connected component) or \emph{(ii) having no positive steady states} (and moreover having all trajectories converge to the boundary of the positive orthant or to infinity). We showed that these two cases can be differentiated by  a very simple geometric criterion: a single-target mass-action system is globally stable if and only if the target vertex is in the relative interior of the network's Newton polytope. 

%
In general, the single-target condition is quite restrictive, and few networks of interest will satisfy it at the outset. On the other hand, it is a very simple geometric condition, which makes is easy to characterize the set in parameter space where networks that fail to be single-target give rise to systems that {\em can be realized by} single-target networks via dynamical equivalence. Using this idea, we have exhibited several examples where our  results can be useful for analyzing networks that exhibit a high degree of symmetry or geometric structure, even if they are {\em not} single-target networks. 


Finally, recognizing that single-target networks are related to (strongly) endotactic networks, we explored some networks with similar geometry but having multiple targets. For these examples we showed that the corresponding mass-action systems are dynamically equivalent to complex-balanced systems if and only if the rate constants satisfy some semi-algebraic conditions. While our examples have relatively simple structures that allow us to derive explicit inequalities on the rate constants, a natural question and future research direction is whether such semi-algebraic conditions on the rate constants can be obtained for more general classes of reaction networks.

\section*{Acknowledgements} 

This work was supported in part by the National Science Foundation under grant DMS--1816238. P.Y.Y. was also partially supported by NSERC.  

\bibliographystyle{siam}
\bibliography{cit}

\end{document}